\DeclareMathAlphabet{\mathbfsl}{OT1}{ppl}{b}{it} 
\newcommand{\mmod}{{\mbox{mod}}}
 \DeclareRobustCommand{\nsbinom}{\genfrac[]\z@{}}
\newcommand{\abs}[1]{\left|#1\right|}
\newcommand{\field}[1]{\mathbb{#1}}
\newcommand{\Z}{\field{Z}}
\newcommand{\cB}{{\cal B}}
\newcommand{\cD}{{\cal D}}
\newcommand{\cF}{{\cal F}}
\newcommand{\cS}{{\mathcal{S}}}
\newcommand{\linadd}{\kern1pt\mbox{\small$\boxplus$}\kern1pt}
\newcommand{\Cref}[1]{Co\-ro\-lla\-ry\,\ref{#1}}
\newtheorem{theorem}{Theorem}[section]
\newtheorem{lemma}[theorem]{Lemma}
\newtheorem{corollary}[theorem]{Corollary}
\newtheorem{example}[theorem]{Example}
\newtheorem{construction}[theorem]{Construction}
\newtheorem{remark}[theorem]{remark}
\title{Pairs in Nested Steiner Quadruple Systems}
\date{\today}
\author{\Large {\textbf{Yeow Meng Chee$^\text{x}$}, \textbf{Son Hoang Dau$^\text{+}$},}\\
{\Large \textbf{Tuvi Etzion$^{\text{x}*}$}, \textbf{Han Mao Kiah$^\bullet$}, \textbf{Wenqin Zhang$^\dag$}}\\
{\small $^\text{x}$Dept. of Industrial Systems Engineering and Management, National University of Singapore}\\
{\small $^+$ School of Computing Technologies, RMIT University, Melbourne, VIC 3000, Australia}\\
{\small $^*$Computer Science Faculty, Technion, Israel Institute of Technology, Haifa 3200003, Israel}\\
{\small $^\bullet$School of Physical and Mathematical Sciences, Nanyang Technological University, Singapore}\\
{\small $^\dag$School of Electronic Information and Engineering, Shanghai Jiao tong University, China}\\
{\small {\it ymchee@nus.edu.sg}, {\it sonhoang.dau@rmit.edu.au}, {\it etzion@cs.technion.ac.il},}\\
{\small {\it hmkiah@ntu.edu.sg}, {\it wenqin$\_$zhang@sjtu.edu.cn}   }
}
\begin{document}

\maketitle

\begin{abstract}
Motivated by a repair problem for fractional repetition codes in distributed storage, each block
of any Steiner quadruple system (SQS) of order $v$ is partitioned into two pairs.
Each pair in such a partition is called a nested design pair
and its multiplicity is the number of times it is a pair in this partition.
Such a partition of each block is considered as a new block design called a nested Steiner quadruple system.
Several related questions on this type of design are considered in this paper:
What is the maximum multiplicity of the nested design pair with minimum multiplicity?
What is the minimum multiplicity of the nested design pair with maximum multiplicity?
Are there nested quadruple systems in which all the nested design pairs have the same multiplicity?
Of special interest are nested quadruple systems in which
all the $\binom{v}{2}$ pairs are nested design pairs with the same multiplicity.
Several constructions of nested quadruple systems are considered and in particular classic constructions of SQS are examined.
\end{abstract}


\section{Introduction}
\label{sec:intro}

A {\bf \emph{Steiner quadruple system}} (SQS) is a pair $(Q,\cB)$, where $Q$ is a finite set of {\bf \emph{points}}
and $\cB$ is a collection of 4-subsets of $Q$ called {\bf \emph{blocks}} such that every 3-subset of $Q$
is contained in exactly one block of $\cB$.
Motivated by a repair problem for fractional repetition codes~\cite{CDEKLZ} we ask natural questions associated with a partition
of each block in an SQS of order $v$ (SQS$(v)$), where $v$ is the number of points in $Q$, into pairs.
An SQS of order $v$ with such a partition will be called a {\bf \emph{nested Steiner quadruple system}}
(in short, {\bf \emph{nested SQS}} or {\bf \emph{nested SQS$(v)$}}).
For a given SQS and a nested quadruple system for it, the number of times ({\bf \emph{multiplicity}}) that each pair is contained
in this partition is considered. A pair which is contained in the partition
of some block will be called a {\bf \emph{nested design pair}} (in short, {\bf \emph{ND-pair}}) .
Hanani~\cite{Han60} proved that an SQS$(v)$ exists if and only if $v \geq 4$
and ${v \equiv 2 ~ \text{or} ~ 4 ~(\mmod ~ 6)}$. His proof consists of six recursive constructions which together cover all the values of
$v$ that are congruent to 2 or 4 modulo~6. Other recursive constructions for SQSs were given by~\cite{Han63,Rok67}.
By carefully examining some of these constructions interesting nested quadruple systems can be found.
The nested quadruple system problem is motivated by a repair problem for fractional repetition codes, where for a block $\{ x,y,z,w \}$
three pairs of the form $\{ x,y \}$, $\{ y,z \}$, and $\{ z,w \}$ are considered~\cite{CDEKLZ}.
Nested designs were considered in the past, e.g.~\cite{CFM20,FKKMS,KuFu94,OJKM} and references therein under the names
of nested designs or split-block designs. In these papers there are also partitions of the elements in the blocks,
but to our best knowledge, the current paper is the first one which considers such a partition of each block
into pairs and an analysis of the ND-pairs.

In this work we consider only the combinatorial design problems, where the block is partitioned into two pairs
such as $\{ x,y \}$ and $\{ z,w \}$, and we concentrate on a few related questions:

\begin{enumerate}
\item[(1)] For a given $v$, does there exist a nested SQS$(v)$ in which the multiplicity
of all the ND-pairs is the same?

\item[(2)] For a given $v$, does there exist a nested SQS$(v)$ in which all pairs
are ND-pairs and the multiplicity of each one is the same?

\item[(3)] For a given $v$, is there a threshold $\mu(v)$ such that there exists a nested SQS$(v)$
in which the multiplicity of each ND-pair is at least $\mu(v)$?

\item[(4)] For a given $v$, what is the threshold $\mu (v)$, for which there exists an SQS$(v)$, for which all the $\binom{v}{2}$ pairs
are ND-pairs whose multiplicity is at least $\mu(v)$?

\item[(5)] Is there an upper bound on the multiplicity of the ND-pair with the smallest multiplicity?
Is there a lower bound on the multiplicity of the ND-pair with the largest multiplicity?

\item[(6)] What is the minimum number of ND-pairs in a nested SQS$(v)$?

\item[(7)] Finally, we want to propose constructions for nested quadruple systems for which we can analyze the number of
ND-pairs and their multiplicities.
\end{enumerate}

Note that the difference between (1) and (2) to (3) and (4) is that in (1) and (3) not all the pairs are ND-pairs, while in
(2) and (4) all the pairs are ND-pairs.

\begin{example}
\label{ex:Bool8}
We start with a nested SQS$(8)$ which answer some questions for $v=8$.

Let $\Z_8=\{0,1,2,3,4,5,6,7\}$ be the set of points and consider the following set of pairs for a nested SQS$(8)$.

$$
[ \{0,1 \},\{2,3\}], ~ [ \{0,1 \},\{4,5\}], ~ [ \{0,1 \},\{6,7\}], ~ [ \{0,2 \},\{4,6\}], ~ [ \{0,2 \},\{5,7\}],
$$
$$
[ \{0,3 \},\{4,7\}], ~[ \{0,3 \},\{5,6\}], ~[ \{1,2 \},\{5,6\}], ~[ \{1,2 \},\{4,7\}],  ~[ \{1,3 \},\{4,6\}],
$$
$$
[ \{1,3 \},\{5,7\}], ~ [ \{2,3 \},\{4,5\}], ~[ \{2,3 \},\{6,7\}], ~[ \{4,5 \},\{6,7\}]~.
$$

These 14 blocks form an SQS(8), where each one of the ND-pairs $\{0,1\}$, $\{2,3\}$, $\{4,5\}$, and $\{6,7\}$ has multiplicity 3.
Each one of the ND-pairs $\{ 0,2\}$, $\{1,3\}$, $\{4,6\}$, $\{5,7\}$, $\{0,3\}$, $\{1,2\}$, $\{ 4,7\}$, and $\{5,6\}$ has multiplicity 2.

Consider this representation of the unique SQS$(8)$~\cite{Bar08}, where we can consider the integer $k \in \Z_8$ with its binary representation as
a vector of length 3. We can observe that the sum of the points in each block is the all-zeros vector and hence
this SQS form the words of length~8
in the extended Hamming code. We can also observe that the 12 ND-pairs represent all the pairs $\{ x,y \}$, where $x+y$
is equal to $001$, $010$, or $011$.

\hfill\quad $\blacksquare $
\end{example}

The constructions which will be given in the sequel will use some combinatorial designs and especially certain types of
SQSs. We summarize two types of such SQSs now.

\noindent
{\bf \underline{Rotational SQS}:}

A rotational SQS$(v+1)$ is a system $(\Z_v \cup \{\infty\},\cB)$, where the blocks have the following property.
If the block $\{ \infty,x,y,z \}$ is in~$\cB$, then also the block $\{ \infty,x+1,y+1,z+1 \}$ is in~$\cB$ and if the block
$\{ x,y,z,w \}$, that does not contain the point $\infty$, is in $\cB$, then also the block $\{ x+1,y+1,z+1,w+1 \}$ is in $\cB$.
The point $\infty$ is a fixed point, while on all the other points cyclic shifts associated with $\Z_v$ are performed from
one block to the next block. In the nested quadruple system which will
be discussed, if the block $\{ \infty,x,y,z \}$ is partitioned into the two pairs $\{ \infty,x\}$ and $\{y,z \}$, then the block
$\{ \infty,x+1,y+1,z+1 \}$ is partitioned into the two pairs $\{ \infty,x+1\}$ and $\{y+1,z+1 \}$.
If the block $\{ x,y,z,w \}$ is partitioned into the two pairs $\{x,y\}$ and $\{z,w \}$, then the block
$\{ x+1,y+1,z+1,w+1 \}$ is partitioned into the two pairs $\{ x+1,y+1\}$ and $\{z+1,w+1 \}$.
This will enable us to analyze the nested SQS only by a set of base blocks,
i.e., consider the orbits of the system. Some work on rotational SQS can be found in~\cite{Car56,FCJ08,JiZh02,Phe77}.
Another type of rotational SQS$(v+1)$ has a {\bf \emph{multiplier group}} $M$ which contains a subgroup of the
multiplicative group of the integers modulo $v$.
If $\alpha \in M$ and $\{ x,y,z,w \}$ is a block in the system, then also $\{ \alpha x,\alpha y, \alpha z,\alpha w \}$ in a block in the system.
Now, if the block $\{ x,y,z,w \}$ is partitioned into the two pairs $\{x,y\}$ and $\{z,w \}$, then the block
$\{ \alpha x,\alpha y, \alpha z,\alpha w \}$ is partitioned into the two pairs $\{\alpha x, \alpha y\}$ and $\{ \alpha z, \alpha w \}$.
This enable us to reduce further the set of base blocks for the system.

\vspace{0.2cm}

\noindent
{\bf \underline{Boolean SQS}:}

Boolean SQS$(2^n)$ is a system $(\Z_2^n,\cB)$, where the quadruple $\{ x, y, z, w \}$ is a block in $\cB$ if and only if $x+y+z+w$ is
the all-zeros vector of length $n$. Example~\ref{ex:Bool8} is a nested SQS$(8)$.
The system can be represented  differently, where each nonzero point $x$ is a power of
a primitive element $\alpha$ in GF$(2^n)$. This representation implies that if $\{ x,y,z,w \}$ is a block in~$\cB$, then also
$\{ \alpha x, \alpha y , \alpha z , \alpha w \}$ is a block in $\cB$. Hence, if the points are ordered as consecutive
power of $\alpha$, by this representation the Boolean SQS$(2^n)$ is
a rotational SQS$(2^n)$, where the all-zeros vector is the $\infty$ fixed point.

\vspace{0.2cm}

A block from which other blocks are generated for a rotational SQS or a Boolean SQS is called a {\bf \emph{base block}}.

The rest of the paper is organized as follows.
In Section~\ref{sec:large_int} two questions on the multiplicity of ND-pairs are considered.
First, we consider a lower bound on the number of ND-pairs in a nested SQS.
Then, we find an upper bound on the multiplicity of the ND-pair with the smallest multiplicity
and a lower bound on the multiplicity of the ND-pair with the largest multiplicity.
Other bounds on the number of ND-pairs and their multiplicities will be considered too.
In Section~\ref{sec:recursive} we introduce two recursive constructions
for a nested SQS in which each ND-pair has a relatively large multiplicity.
These constructions are doubling constructions, i.e., from a nested SQS$(v)$ we construct a nested SQS$(2v)$. In these constructions
the multiplicity of the ND-pairs in the nested SQS$(v)$ will be taken into account when the multiplicity of
the pairs in the blocks nested SQS$(2v)$ will be calculated. But, in any case the multiplicity of the ND-pairs will be large.
Section~\ref{sec:uniform} discusses whether there exist nested quadruple systems in which all the
ND-pairs have the same multiplicity. We prove that such a nested SQS$(v)$ in which all the pairs
are ND-pairs can exist only if $v \equiv 2 ~(\mmod ~ 6)$. We prove the existence of such uniform nested quadruple systems for
some values and suggest some construction methods. We consider such uniform nested quadruple systems when the number
of ND-pairs is the smallest possible. Finally, in Section~\ref{sec:conclude}
a summary of the results is given with a few interesting questions which we have not been able to solve.

\section{Bounds Concerning the Nested Design Pairs}
\label{sec:large_int}

In this section we derive a lower bound on the number of ND-pairs in a nested SQS.
We continue and prove an upper bound on the multiplicity of ND-pairs with the smallest multiplicity.
In Section~\ref{sec:recursive} we will prove that these two bounds are tight.
The first lemma is an immediate consequence from the fact that each triple is contained in exactly one block of an SQS.

\begin{lemma}
\label{lem:max_multiplicity}
Any pair $\{x,y\}$ in a block of an SQS$(v)$, is contained in $\frac{v-2}{2}$ blocks, i.e., the multiplicity of
an ND-pair in the nested SQS is at most $\frac{v-2}{2}$.
\end{lemma}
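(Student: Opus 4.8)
The plan is to count, for a fixed pair $\{x,y\}$ appearing in some block of the SQS$(v)$, the number of blocks that contain it. The key observation is that a block containing $\{x,y\}$ is determined by the remaining two points $\{z,w\}$, and that each such block accounts for exactly two triples through $\{x,y\}$, namely $\{x,y,z\}$ and $\{x,y,w\}$. So first I would fix $\{x,y\}$ and consider the collection of triples of the form $\{x,y,t\}$ for $t \in Q \setminus \{x,y\}$; there are exactly $v-2$ such triples.

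Next I would invoke the defining property of an SQS: every $3$-subset of $Q$ lies in exactly one block of $\cB$. Hence each of these $v-2$ triples $\{x,y,t\}$ is covered by a unique block $B_t \in \cB$. Since $\{x,y\} \subset \{x,y,t\} \subseteq B_t$ and $|B_t| = 4$, the block $B_t$ has the form $\{x,y,t,t'\}$ for some $t' \neq t$, and $B_t$ simultaneously covers the two triples $\{x,y,t\}$ and $\{x,y,t'\}$. Thus the map $t \mapsto B_t$ from the $v-2$ triples to the set of blocks containing $\{x,y\}$ is a surjection that is exactly two-to-one: a block $\{x,y,t,t'\}$ is the image of precisely the two triples $\{x,y,t\}$ and $\{x,y,t'\}$. (Two distinct blocks containing $\{x,y\}$ cannot share a third point, or that common triple would lie in two blocks, contradicting the SQS property, so distinct blocks give disjoint pairs of triples.) Counting gives that the number of blocks containing $\{x,y\}$ is $(v-2)/2$.

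Finally, I would conclude the stated consequence: since in the nested SQS the multiplicity of an ND-pair $\{x,y\}$ is the number of blocks whose partition contains $\{x,y\}$, and each such block in particular contains $\{x,y\}$, the multiplicity is bounded above by the total number of blocks containing $\{x,y\}$, which is $(v-2)/2$. As a sanity check, $v \equiv 2$ or $4 \pmod 6$ forces $v$ even, so $(v-2)/2$ is a positive integer, consistent with the count.

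There is no real obstacle here; the only point requiring a word of care is the two-to-one claim — verifying that distinct blocks through $\{x,y\}$ contribute disjoint pairs of triples — which again is immediate from the uniqueness clause in the definition of an SQS. The argument is short and purely combinatorial.
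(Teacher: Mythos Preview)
Your proof is correct and is precisely the standard counting argument the paper has in mind: the authors present this lemma without a written proof, describing it as ``an immediate consequence from the fact that each triple is contained in exactly one block of an SQS.'' Your two-to-one count of triples $\{x,y,t\}$ against blocks through $\{x,y\}$ is exactly that immediate consequence spelled out.
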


\begin{lemma}
\label{lem:maxPairsMax}
The number of ND-pairs with maximum multiplicity $\frac{v-2}{2}$ in a nested SQS$(v)$ is at most $\frac{v}{2}$.
\end{lemma}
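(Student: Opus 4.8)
The plan is to show that any two ND-pairs attaining the maximum multiplicity $\frac{v-2}{2}$ must be disjoint, so that these pairs form a matching on the $v$ points of $Q$, which immediately bounds their number by $\frac{v}{2}$ (recall $v$ is even since $v\equiv 2$ or $4\pmod 6$).

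First I would record the characterization of maximum-multiplicity pairs that follows from \Lref{lem:max_multiplicity}: a pair $\{x,y\}$ lies in exactly $\frac{v-2}{2}$ blocks of the SQS, so its multiplicity in the nested SQS equals $\frac{v-2}{2}$ if and only if, in \emph{every} block $B$ containing $\{x,y\}$, the chosen partition of $B$ into two pairs has $\{x,y\}$ as one of its parts. (Each block containing $\{x,y\}$ can contribute at most $1$ to the multiplicity of $\{x,y\}$, so attaining $\frac{v-2}{2}$ forces every such block to contribute exactly $1$.)

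Next, the key step: suppose for contradiction that two distinct ND-pairs of maximum multiplicity share a point, say $\{x,y\}$ and $\{x,z\}$ with $y\neq z$. Since $(Q,\cB)$ is an SQS, the triple $\{x,y,z\}$ lies in a unique block $B\in\cB$. Applying the characterization to $\{x,y\}$, the partition of $B$ pairs $x$ with $y$; applying it to $\{x,z\}$, the partition of $B$ pairs $x$ with $z$. But in a partition of a $4$-set into two pairs the point $x$ is paired with exactly one other point, so $y=z$, a contradiction. Hence the ND-pairs of maximum multiplicity are pairwise disjoint.

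Finally, a family of pairwise disjoint $2$-subsets of a $v$-set has at most $\lfloor v/2\rfloor=\frac{v}{2}$ members, which gives the claimed bound. I expect the only point requiring care is the clean statement of the "appears in the partition of every containing block" characterization; once that is in place, the disjointness argument and the matching bound are immediate. $\blacksquare$
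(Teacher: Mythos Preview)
Your proposal is correct and follows essentially the same approach as the paper: both arguments use the fact that a pair of maximum multiplicity must appear as a part in the partition of \emph{every} block containing it, then look at the unique block through the triple $\{x,y,z\}$ to conclude that two maximum-multiplicity pairs cannot share a point, and finish with the matching bound $v/2$. Your write-up is a bit more explicit about the characterization step, but the logic is identical.
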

\begin{proof}
Let $\{x,y\}$ be a ND-pair with multiplicity $\frac{v-2}{2}$.
Consider first the multiplicity of a pair $\{x,z\}$, where $z \neq y$.
Let $\{x,z,y,w\}$ be the unique quadruple which contains the triple $\{x,y,z\}$.
Since $\{x,y\}$ has multiplicity $\frac{v-2}{2}$, it follows that the block $\{x,z,y,w\}$ is partitioned
into $\{x,y\}$ and $\{z,w\}$ in the nested SQS and hence $\{x,z\}$ does not have multiplicity $\frac{v-2}{2}$.
Applying iteratively these arguments implies that all ND-pairs with multiplicity $\frac{v-2}{2}$
are pairwise disjoint and hence the claim of the lemma follows.
\end{proof}

Since the size of an SQS$(v)$ is $\frac{v(v-1)(v-2)}{24}$ and each block is partitioned into two pairs for the nested SQS,
we have the following consequence.

\begin{lemma}
\label{lem:total_pairs}
The total number of pairs in the blocks of a nested SQS$(v)$ is $\frac{v(v-1)(v-2)}{12}$.
\end{lemma}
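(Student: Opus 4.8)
The plan is a straightforward double count. First I would establish (or simply quote) the size of an SQS$(v)$: since every $3$-subset of $Q$ lies in exactly one block and every block contains exactly $\binom{4}{3}=4$ distinct $3$-subsets, the number of blocks equals $\binom{v}{3}/4 = \frac{v(v-1)(v-2)}{24}$. This is the standard counting argument for Steiner quadruple systems, and it is already the fact invoked in the sentence immediately preceding the statement, so in the write-up it can either be proved in one line as above or cited.

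The second step is to pass from blocks to pairs. By the definition of a nested SQS, each block $\{x,y,z,w\}$ is partitioned into \emph{exactly two} pairs, e.g. $\{x,y\}$ and $\{z,w\}$; no block contributes fewer or more than two pairs to the partition structure. Hence the multiset of all pairs occurring in the blocks of the nested SQS — equivalently, the sum over all ND-pairs of their multiplicities — has cardinality exactly $2 \cdot \frac{v(v-1)(v-2)}{24} = \frac{v(v-1)(v-2)}{12}$, which is the claimed value.

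There is no genuine obstacle here: the only thing one must be careful about is the interpretation of ``total number of pairs,'' namely that it is counted \emph{with multiplicity} (it is $\sum_{p}$ of the multiplicities, not the number of \emph{distinct} ND-pairs), and that the two pairs obtained from a single block are always counted as two contributions even when, across different blocks, the same pair recurs. With that convention fixed, the identity is immediate from the two displayed counts. I would present this as a one-paragraph proof.
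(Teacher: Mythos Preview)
Your proposal is correct and matches the paper's approach exactly: the paper simply states the lemma as an immediate consequence of the fact that an SQS$(v)$ has $\frac{v(v-1)(v-2)}{24}$ blocks and each block contributes two pairs. Your added clarification that ``total number of pairs'' is counted with multiplicity is accurate and consistent with how the quantity is used later (e.g., in Lemmas~\ref{lem:upper_multi_min} and~\ref{lem:upper_multi_max}).
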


\begin{lemma}
\label{lem:lower_nonzeroPN}
If $(Q,\cB)$ is a nested SQS$(v)$, then each element of $Q$, is contained in at least $\frac{v-2}{2}$ ND-pairs.
\end{lemma}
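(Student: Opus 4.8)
Fix an element $x\in Q$. The plan is to count, in two ways, the pairs in which $x$ appears across all blocks of the nested SQS. First I would recall that in an $\text{SQS}(v)$ the element $x$ lies in exactly $r=\binom{v-1}{2}/3=\frac{(v-1)(v-2)}{6}$ blocks, since each of the $\binom{v-1}{2}$ triples through $x$ determines a unique block and each block through $x$ carries $\binom{3}{2}=3$ such triples. In every one of these $r$ blocks, the partition into two pairs places $x$ in exactly one pair; so the total number of \emph{occurrences} of $x$ inside ND-pairs (counted with block multiplicity) is exactly $r=\frac{(v-1)(v-2)}{6}$.

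Next I would bound how many times a single ND-pair $\{x,y\}$ can contribute to this count. By \Lref{lem:max_multiplicity}, any pair in a block of an $\text{SQS}(v)$ — in particular $\{x,y\}$ — is contained in at most $\frac{v-2}{2}$ blocks, hence $\{x,y\}$ can occur as the pair containing $x$ in at most $\frac{v-2}{2}$ of the $r$ blocks through $x$. Therefore, if $N_x$ denotes the number of distinct ND-pairs through $x$, we get
\[
\frac{(v-1)(v-2)}{6} \;\le\; N_x\cdot\frac{v-2}{2},
\]
which rearranges to $N_x\ge \frac{v-1}{3}$. This is a bit weaker than the claimed $\frac{v-2}{2}$, so the crude "maximum multiplicity" bound alone does not suffice and the argument needs to be sharpened.

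The main obstacle, then, is to improve the per-pair bound when many pairs through $x$ are simultaneously present. I would argue as in \Lref{lem:maxPairsMax}: if $\{x,y\}$ and $\{x,z\}$ are both ND-pairs with $y\neq z$, consider the unique block $B$ containing the triple $\{x,y,z\}$; its nested partition puts two of these three points together, so $B$ is split as $\{x,y\}\mid\{z,w\}$, or $\{x,z\}\mid\{y,w\}$, or $\{y,z\}\mid\{x,w\}$ — and in the last case $\{x,y\}$ and $\{x,z\}$ each "lose" the block $B$ from their potential support among blocks through $x$. More usefully, I expect the clean route is to fix \emph{the} ND-pair $\{x,y\}$ of largest multiplicity $m$ through $x$; then for each of the $m$ blocks $\{x,y,z,w\}$ split as $\{x,y\}\mid\{z,w\}$, the $v-2$ points other than $x,y$ are partitioned into $\frac{v-2}{2}$ such pairs $\{z,w\}$ (these are the $m$ blocks, so $m\le\frac{v-2}{2}$ with equality meaning every other point is paired off), while each remaining block through $x$ is split $\{x,u\}\mid\{y,u'\}$ or $\{x,u\}\mid\{y',w'\}$ with $u\notin\{y\}$, forcing a \emph{new} ND-pair $\{x,u\}$; carefully bookkeeping which points $u$ can repeat gives the needed count of at least $\frac{v-2}{2}$ distinct ND-pairs through $x$. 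I would carry out this block-by-block accounting, treating the blocks through $x$ according to which of the three partition types they realize, and verify that the number of distinct second coordinates $u$ appearing in pairs $\{x,u\}$ is forced down to at least $\frac{v-2}{2}$; the extremal case should line up exactly with the $\{x,y\}$-of-multiplicity-$\frac{v-2}{2}$ situation from \Lref{lem:maxPairsMax}, which is a good consistency check.
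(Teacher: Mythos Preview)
Your first double-counting attempt is carried out correctly and, as you note, only yields $N_x\ge\frac{v-1}{3}$, which is too weak. The second paragraph then promises a refinement but never actually delivers one: you fix an ND-pair $\{x,y\}$ of maximal multiplicity $m$, observe that the $\frac{v-2}{2}-m$ blocks through $\{x,y\}$ not split as $\{x,y\}\mid\{z,w\}$ each give a new partner for $x$, and then say ``carefully bookkeeping which points $u$ can repeat gives the needed count.'' But those $\frac{v-2}{2}-m$ blocks together with $y$ itself give only $\frac{v-2}{2}-m+1$ distinct ND-pairs through $x$, which is far short of $\frac{v-2}{2}$ unless $m\le 1$. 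To close the gap you would have to control the blocks through $x$ that \emph{do not} contain $y$, and nothing in your outline indicates how to do that or why repetitions among their $x$-partners are limited. As written, the argument is incomplete and the sketched route does not obviously terminate.

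The paper's proof rests on a single observation you overlooked: instead of choosing $y$ so that $\{x,y\}$ is an ND-pair of large multiplicity, choose (if possible) some $y$ for which $\{x,y\}$ is \emph{not} an ND-pair at all. Then in each of the $\frac{v-2}{2}$ blocks $\{x,y,a_i,b_i\}$ containing $\{x,y\}$, the nested partition cannot be $\{x,y\}\mid\{a_i,b_i\}$, so $x$ is paired with $a_i$ or $b_i$. Since the sets $\{a_i,b_i\}$ are pairwise disjoint (they partition $Q\setminus\{x,y\}$), this already produces $\frac{v-2}{2}$ distinct ND-pairs through $x$. If no such $y$ exists then $x$ is in $v-1\ge\frac{v-2}{2}$ ND-pairs trivially. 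This two-line argument replaces all of your proposed bookkeeping.
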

\begin{proof}
Assume $x$ is an element of $Q$. Let $Q_1$ be the set of elements of $Q$ that are paired with~$x$ for ND-pairs
of the nested SQS. Assume that $Q_1$ has $k$ elements, where $k < v-1$. Let $Q_2$ be the set of $v-k-1$ elements
of $Q \setminus \{x\}$ that are not paired with $x$ for an ND-pair and let $y \in Q_2$.
There are $\frac{v-2}{2}$ blocks with the pair $\{x,y\}$ and they contribute $\frac{v-2}{2}$ distinct ND-pairs
that contain $x$. Thus, $k \geq \frac{v-2}{2}$.
\end{proof}


Lemma~\ref{lem:lower_nonzeroPN} also implies a simple lower bound on the number of ND-pairs in a nested SQS.
\begin{corollary}
\label{cor:lower_nonzeroPN}
The number of ND-pairs in a nested SQS$(v)$ is at least $\frac{v}{2} \left( \frac{v}{2} -1 \right)$.
\end{corollary}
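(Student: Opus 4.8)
The plan is to deduce Corollary~\ref{cor:lower_nonzeroPN} from Lemma~\ref{lem:lower_nonzeroPN} by a double-counting argument on incidences between points and ND-pairs. First I would fix a nested SQS$(v)$ $(Q,\cB)$ and let $N$ denote the number of distinct ND-pairs. For each point $x \in Q$, Lemma~\ref{lem:lower_nonzeroPN} guarantees that $x$ lies in at least $\frac{v-2}{2}$ distinct ND-pairs. Summing this lower bound over all $v$ points counts each ND-pair exactly twice (once for each of its two endpoints), so $2N \ge v \cdot \frac{v-2}{2}$, which immediately gives $N \ge \frac{v(v-2)}{4} = \frac{v}{2}\left(\frac{v}{2}-1\right)$, the claimed bound.

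The only subtlety, and the place where a little care is warranted, is the passage from ``distinct ND-pairs containing $x$'' to an honest incidence count: one must be sure that the collection of ND-pairs is treated as a \emph{set} of unordered pairs (so that a pair appearing in several blocks with various multiplicities is still a single ND-pair), exactly as in the statement of Lemma~\ref{lem:lower_nonzeroPN}. Given that reading, the sum $\sum_{x \in Q} |\{\text{ND-pairs containing }x\}|$ equals $2N$ by the handshake identity, and the inequality follows. This is genuinely routine; I do not anticipate any real obstacle, only the bookkeeping point just mentioned.

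If one wanted to be slightly more explicit in the writeup, I would phrase it as: build the ``ND-pair graph'' $H$ on vertex set $Q$ whose edges are precisely the ND-pairs of the system; then Lemma~\ref{lem:lower_nonzeroPN} says $H$ has minimum degree at least $\frac{v-2}{2}$, and the number of edges of a graph on $v$ vertices with minimum degree $\delta$ is at least $\frac{v\delta}{2}$, yielding $|E(H)| \ge \frac{v}{2}\cdot\frac{v-2}{2} = \frac{v}{2}\left(\frac{v}{2}-1\right)$. Since $v \equiv 2$ or $4 \pmod 6$ the quantity $\frac{v-2}{2}$ is an integer, so no rounding issues arise.
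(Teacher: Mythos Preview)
Your argument is correct and is essentially identical to the paper's own proof: both apply Lemma~\ref{lem:lower_nonzeroPN} to every point, sum the resulting lower bound $\frac{v-2}{2}$ over all $v$ points, and divide by two because each ND-pair is counted twice. The graph-theoretic reformulation and the remark about multiplicities are fine elaborations but add nothing beyond what the paper does in one line.
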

\begin{proof}
By Lemma~\ref{lem:lower_nonzeroPN} each element of $Q$ is contained in at least $\frac{v-2}{2}$ ND-pairs of the nested SQS.
There are $v$ elements in $Q$ for a total of at least $\frac{v}{2}\frac{v-2}{2}$ ND-pairs, where the division by two is done since each
pair is counted twice.
\end{proof}
The upper bounds of Lemmas~\ref{lem:max_multiplicity} and~\ref{lem:maxPairsMax} and also
the lower bounds of Lemma~\ref{lem:lower_nonzeroPN}
and Corollary~\ref{cor:lower_nonzeroPN} are attained by constructions presented in Section~\ref{sec:recursive}.
But, there are values of $v$ for which the lower bounds cannot be attained as a consequence of the next necessary
condition for the existence of a nested SQS$(v)$ with $\frac{v}{2} \left( \frac{v}{2} -1 \right)$ ND-pairs.

\begin{lemma}
\label{lem:necc_min_NZ}
If $\cS=(Q,\cB)$ is a nested SQS$(v)$ for which there are
$\frac{v}{2} \left( \frac{v}{2} -1 \right)$ ND-pairs, then the set
of points $Q$ can be partitioned into two subsets $Q_1$ and $Q_2$ of equal size $\frac{v}{2}$ such that
all pairs of $Q_1$ and all pairs of $Q_2$ are the only ND-pairs.
\end{lemma}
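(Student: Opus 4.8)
The plan is to combine the counting bound of Corollary~\ref{cor:lower_nonzeroPN} with the rigidity already exposed in its proof: equality forces each point to be paired with \emph{exactly} $\frac{v-2}{2}$ others. First I would fix a point $x$ and let $Q_1(x)$ be the set of points paired with $x$ in some ND-pair; by Lemma~\ref{lem:lower_nonzeroPN} and the equality hypothesis (summing the bound $|Q_1(x)| \geq \frac{v-2}{2}$ over all $v$ points gives exactly $\frac{v}{2}\left(\frac{v}{2}-1\right)$ ND-pairs only if every inequality is tight and, moreover, the ND-pairs through distinct points do not "overlap" beyond what the double-count allows — I would check this carefully), we get $|Q_1(x)| = \frac{v-2}{2}$ for every $x$, so each point is a non-neighbour of exactly $\frac{v}{2}$ points (counting $x$ itself). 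Consider the graph $G$ on vertex set $Q$ whose edges are the ND-pairs: it is $\frac{v-2}{2}$-regular, and its complement $\overline{G}$ is $\frac{v}{2}$-regular. The goal is to show $\overline{G}$ is the disjoint union of two cliques of size $\frac{v}{2}$, equivalently that $G$ is the complete bipartite graph $K_{v/2,v/2}$.

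The key structural step is to show $G$ is triangle-free, i.e.\ no three points $x,y,z$ are pairwise ND-pairs. Suppose they were; let $\{x,y,z,w\}$ be the unique block containing the triple $\{x,y,z\}$. Whichever way this block is split into two pairs, exactly one of $\{x,y\},\{y,z\},\{x,z\}$ appears as a pair of that block — but the other two must be ND-pairs contributed by \emph{other} blocks. Then I would count more carefully: the $\frac{v-2}{2}$ blocks through the pair $\{x,y\}$ each pair $x$ with $y$; the blocks realising $\{x,z\}$ pair $x$ with $z$; these are disjoint families of blocks. Now examine the ND-pairs through $x$ other than $\{x,y\}$ and $\{x,z\}$: the block through $\{x,y\}$ and any further point already pairs $x$ with $y$, so to reach $|Q_1(x)|=\frac{v-2}{2}$ the "extra" neighbours of $x$ must come from the $\frac{v-2}{2}$ blocks through $\{x,z\}$ (and symmetrically), and a short double-counting of how many \emph{distinct} second-coordinates these blocks can supply will exceed $\frac{v-2}{2}$ unless $y$ and $z$ are not themselves ND-partners — contradicting the triangle. (This is essentially the same iterative argument used in Lemma~\ref{lem:maxPairsMax}, pushed to the equality regime.) Once $G$ is triangle-free and $\frac{v-2}{2}$-regular on $v$ vertices, it is bipartite with both parts of size $\frac{v}{2}$: a triangle-free graph in which every vertex has degree $\frac{v-2}{2} = \frac{v}{2}-1$ has independence number $\geq \frac{v}{2}$ in each "side", and a counting of edges ($\frac{v}{2}\cdot\frac{v-2}{2}$ of them) forces it to be exactly $K_{v/2,v/2}$; then $Q_1 := $ one side, $Q_2 := $ the other, and all ND-pairs are pairs within $Q_1$ or within $Q_2$ — wait, the complement: since $G = K_{v/2,v/2}$, the non-edges of $G$ are exactly the pairs inside each side, so the ND-pairs are \emph{not} the within-side pairs; I would instead set $G$'s \emph{complement} to be the two cliques. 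So the ND-pairs are the pairs inside $Q_1$ together with the pairs inside $Q_2$, and these number $2\binom{v/2}{2} = \frac{v}{2}\left(\frac{v}{2}-1\right)$, matching the hypothesis.

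The main obstacle I anticipate is the triangle-free step: arguing that a pairwise-ND triple $\{x,y,z\}$ inflates $|Q_1(x)|$ beyond $\frac{v-2}{2}$ requires bookkeeping the blocks through $\{x,y\}$, through $\{x,z\}$, and through the block-pair actually splitting off from $\{x,y,z,w\}$, and showing their contributions to the neighbourhood of $x$ cannot all fit. An alternative, possibly cleaner route avoids graph theory entirely: use Lemma~\ref{lem:total_pairs} (total pair-slots $=\frac{v(v-1)(v-2)}{12}$) and Lemma~\ref{lem:max_multiplicity} (each ND-pair has multiplicity $\leq \frac{v-2}{2}$) to get that with only $\frac{v}{2}\left(\frac{v}{2}-1\right)$ ND-pairs the \emph{average} multiplicity is $\frac{v(v-1)(v-2)/12}{(v/2)(v/2-1)} = \frac{v-2}{2}$, hence every ND-pair has multiplicity \emph{exactly} $\frac{v-2}{2}$ and is therefore disjoint from every other ND-pair by Lemma~\ref{lem:maxPairsMax}'s argument; but that contradicts a single point lying in $\frac{v-2}{2} \geq 2$ ND-pairs, so this averaging must be handled with the correct weighting — I would instead conclude every ND-pair is maximal-multiplicity only locally and recover the partition directly. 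I expect the first (graph-theoretic) argument to be the one that actually goes through, with the triangle-free claim as its crux.
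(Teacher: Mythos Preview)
Your graph-theoretic setup is sound --- equality in the double count does force the ND-pair graph $G$ to be $\tfrac{v-2}{2}$-regular --- but you have the target backwards. The lemma asserts that the ND-pairs are precisely the \emph{within-part} pairs, so $G$ must be the disjoint union of two cliques $K_{v/2}$, which is full of triangles; it is the \emph{non}-ND-pair graph $\overline{G}$ that should come out as $K_{v/2,v/2}$ and hence triangle-free. Your ``key structural step'' (no three points are pairwise ND-pairs) is therefore false, and the block-counting you sketch for it cannot succeed. You nearly catch this in your ``wait, the complement'' aside but then re-muddle it. The fix is simply to swap: show $\overline{G}$ is triangle-free, which is a one-liner --- if $x,y,z$ were pairwise non-ND, the unique block $\{x,y,z,w\}$ would admit no legal split, since each of its three possible $2{+}2$ partitions uses one of the forbidden pairs $\{x,y\},\{x,z\},\{y,z\}$. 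Then $\overline{G}$ is $\tfrac{v}{2}$-regular and triangle-free on $v$ vertices, hence has $v^2/4$ edges, and the equality case of Tur\'an's theorem forces $\overline{G}=K_{v/2,v/2}$. (Your averaging alternative also stumbles on arithmetic: the average multiplicity is $\frac{v(v-1)(v-2)/12}{v(v-2)/4}=\frac{v-1}{3}$, not $\frac{v-2}{2}$, so there is no equality-with-maximum to exploit.)

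The paper's own proof is more direct and avoids Tur\'an entirely. It picks a non-ND-pair $\{x,y\}$ and observes that in each of the $\tfrac{v-2}{2}$ blocks $\{x,y,a_i,b_i\}$ the split must pair $x$ with one of $a_i,b_i$ and $y$ with the other; since the $a_i,b_i$ exhaust $Q\setminus\{x,y\}$ and (by regularity) $x$ and $y$ each have exactly $\tfrac{v-2}{2}$ ND-partners, this already forces $Q_x:=\{x\}\cup\{\text{ND-partners of }x\}$ and $Q_y:=\{y\}\cup\{\text{ND-partners of }y\}$ to partition $Q$ into halves. Repeating the argument with the non-ND-pairs $\{x,b\}$ (for $b\in Q_y$) and $\{y,a\}$ (for $a\in Q_x$) shows every point's ND-neighbourhood lies entirely on its own side, which is the claim. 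Once you correct the complement confusion, your route via Tur\'an is a genuine and arguably cleaner alternative; the paper's route trades the extremal lemma for an explicit propagation step.
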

\begin{proof}
Let $x$ and $y$ be two distinct points for which $\{ x,y \}$ is not a ND-pair. There are $\frac{v-2}{2}$
blocks of the form $\{ x,y,a_i,b_i \}$, $1 \leq i \leq \frac{v-2}{2}$, where $\{ a_i , b_i \} \cap \{ a_j , b_j \} = \varnothing$
for $1 \leq i<j \leq \frac{v-2}{2}$. By Lemma~\ref{lem:lower_nonzeroPN} $x$ is contained in at least $\frac{v-2}{2}$ ND-pairs.
But, since the number of ND-pairs meets the lower bound of Corollary~\ref{cor:lower_nonzeroPN}, it follows that
$Q$ can be partitioned into to sets $Q_x$ and $Q_y$, each one of size $\frac{v}{2}$, such that
$x \in Q_x$, $y \in Q_y$, for each $a \in Q_x \setminus \{ x \}$ we have that $\{x,a\}$ is a ND-pair, and
for each $b \in Q_y \setminus \{ y \}$ we have that $\{y,b\}$ is a ND-pair. Moreover,
for each $a \in Q_x$ we have that $\{y,a\}$ is not a ND-pair and for each $b \in Q_y$ we have that $\{x,b \}$ is not a ND-pair.
Hence, the same arguments applied on the pair $\{ x,y \}$ can be applied on the pairs $\{x,b\}$ and $\{y,a\}$ and in both
cases $Q$ is partitioned to exactly the same two subsets $Q_x$ and $Q_y$.
This implies the claim of the lemma.
\end{proof}

\begin{lemma}
\label{lem:necc_min_NZ_odd}
If $v \equiv 2 ~ \text{or} ~ 10 ~(\mmod ~12)$, then in each nested SQS$(v)$, on $\Z_v$,
each element is contained in at least $\frac{v}{2}$ ND-pairs.
\end{lemma}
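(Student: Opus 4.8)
The plan is to sharpen the counting argument of Lemma~\ref{lem:lower_nonzeroPN} by a parity consideration that becomes available precisely when $\frac{v-2}{2}$ is odd, i.e.\ when $v\equiv 0\pmod 4$; combined with $v\equiv 2$ or $4\pmod 6$ this is exactly the congruence classes $v\equiv 4$ or $16\pmod{12}$, and one checks that $v\equiv 2$ or $10\pmod{12}$ is the complementary statement for admissible $v$ — so let me re-read: the hypothesis is $v\equiv 2$ or $10\pmod{12}$, meaning $\frac{v-2}{2}$ is even and $\frac{v}{2}$ is odd. The point will then be that the lower bound $k\ge\frac{v-2}{2}$ from Lemma~\ref{lem:lower_nonzeroPN} cannot be met with equality, so $k\ge\frac{v-2}{2}+1=\frac v2$. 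Fix $x\in\Z_v$, let $Q_1$ be the set of points paired with $x$ in some ND-pair and suppose for contradiction that $|Q_1|=\frac{v-2}{2}$. By the last paragraph of the proof of Lemma~\ref{lem:lower_nonzeroPN} (or by Lemma~\ref{lem:necc_min_NZ}), this forces a very rigid structure: the $\frac{v-2}{2}$ blocks through any non-ND pair $\{x,y\}$ use up disjoint pairs, and $\{x\}\cup Q_1$ together with its complement realize all ND-pairs on $x$'s side as pairs inside $\{x\}\cup Q_1$. In particular $|\{x\}\cup Q_1|=\frac v2$, and every block containing $x$ is partitioned so that $x$ sits with a point of $Q_1$.

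Next I would count, inside the SQS restricted to the derived design at $x$, how the pairs $\{x,a\}$ with $a\in Q_1$ are distributed. The derived design of an SQS$(v)$ at the point $x$ is a Steiner triple system STS$(v-1)$ on $\Z_v\setminus\{x\}$; each block $\{x,a,b,c\}$ of the SQS containing $x$ corresponds to a triple $\{a,b,c\}$ of this STS$(v-1)$, and the nested partition of that block pairs $x$ with exactly one of $a,b,c$ — say $a$ — so it contributes the ND-pair $\{x,a\}$. Thus the multiset of "$x$-partners" over all blocks through $x$ is obtained by choosing, in each of the $\frac{(v-1)(v-2)}{6}$ triples of the STS$(v-1)$, one distinguished point. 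The multiplicity of the ND-pair $\{x,a\}$ equals the number of triples in which $a$ is the distinguished point. Since $a$ lies in exactly $\frac{v-2}{2}$ triples of the STS$(v-1)$, the multiplicity of $\{x,a\}$ is at most $\frac{v-2}{2}$, recovering Lemma~\ref{lem:max_multiplicity}; and summing over $a\in Q_1$ gives $\sum_{a\in Q_1}(\text{mult of }\{x,a\})=\frac{(v-1)(v-2)}{6}$.

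The main obstacle — and the heart of the argument — is to extract a contradiction from the parity of $\frac v2$ under the assumption $|Q_1|=\frac{v-2}{2}$. Here I would use the rigid structure: under that assumption $\{x\}\cup Q_1=:Q_x$ has size $\frac v2$ and, by Lemma~\ref{lem:necc_min_NZ}, $Q_x$ and its complement $Q_y$ have the property that all ND-pairs are pairs within $Q_x$ or within $Q_y$. Now count pairs: by Lemma~\ref{lem:total_pairs} the total number of pairs in blocks, with multiplicity, is $\frac{v(v-1)(v-2)}{12}$, and each such pair lies inside $Q_x$ or inside $Q_y$. On the other hand, each block $\{p,q,r,s\}$ of the SQS is split into two pairs, each of which lies wholly in $Q_x$ or wholly in $Q_y$; so each block contributes its four points split as $2+2$ with both pairs monochromatic, forcing each block to have an even number of points in $Q_x$ — i.e.\ $0$, $2$, or $4$. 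But counting triples: a triple with one point in $Q_x$ and two in $Q_y$ (there are $\frac v2\binom{v/2}{2}$ of them) must lie in a block that meets $Q_x$ in an odd number of points, contradiction. This parity clash — available because $|Q_x|=\frac v2$ is odd forces the count $\frac v2\binom{v/2}{2}$ to be odd while a $0/2/4$ split makes every triple-type count even — is exactly what rules out $|Q_1|=\frac{v-2}{2}$. I expect the delicate point to be verifying the parity arithmetic (that $v\equiv 2$ or $10\pmod{12}$ makes the relevant binomial counts have the right parities) and confirming that the $2+2$ monochromatic split genuinely forces every block to be $0$, $2$, or $4$ in $Q_x$; once that is pinned down, the contradiction gives $k=|Q_1|\ge\frac{v-2}{2}+1=\frac v2$, as claimed.
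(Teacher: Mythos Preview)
Your proposal has two genuine gaps, and the paper takes a different, simpler route.

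First, you cannot invoke Lemma~\ref{lem:necc_min_NZ} here. That lemma assumes the \emph{total} number of ND-pairs equals the minimum $\frac{v}{2}\bigl(\frac{v}{2}-1\bigr)$, whereas your hypothesis is only that the single element $x$ has exactly $\frac{v-2}{2}$ ND-partners. From this local assumption you learn only that every block through $x$ pairs $x$ with some element of $Q_1$; you learn nothing about blocks not containing $x$, so there is no reason every ND-pair should be monochromatic with respect to $(Q_x,Q_y)$, and hence no reason every block should meet $Q_x$ in $0$, $2$, or $4$ points.

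Second, even granting the unjustified $0/2/4$ structure, your parity count does not give a contradiction for all $v\equiv 2,10\pmod{12}$. Take $v=34$, so $|Q_x|=|Q_y|=17$. The number of $(1,2)$-triples is $17\binom{17}{2}=2312$, which is even; the number of $(3,0)$-triples is $\binom{17}{3}=680$, which is divisible by $4$; and the implied block counts ($170$ blocks with four points in $Q_x$, $170$ with none, $1156$ with two) sum exactly to $1496=\binom{34}{3}/4$. No parity obstruction arises.

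The paper's argument is purely local and avoids both issues. Writing $\frac{v}{2}=2k+1$, let $Q_1=\{x\}\cup\{\text{ND-partners of }x\}$ and $Q_2$ its complement, each of size $2k+1$. For every pair $\{b,c\}\subset Q_2$, the unique block through $\{x,b,c\}$ must pair $x$ with its fourth point $\alpha$, forcing $\alpha\in Q_1\setminus\{x\}$. For fixed $\alpha$ the resulting pairs $\{b,c\}$ are pairwise disjoint (two blocks through $\{x,\alpha\}$ share no further point), so at most $\lfloor(2k+1)/2\rfloor=k$ of them. Since $|Q_1\setminus\{x\}|=2k$, at most $2k\cdot k$ pairs of $Q_2$ are accounted for, yet there are $\binom{2k+1}{2}=k(2k+1)$ such pairs, giving $2k^2\ge 2k^2+k$, a contradiction.
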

\begin{proof}
Assume for the contrary that there exists an element $x \in \Z_v$ which is contained in only $\frac{v}{2}-1$ ND-pairs
(by Lemma~\ref{lem:lower_nonzeroPN} it cannot be smaller).
Let $Q_1$ be the set which contains $x$ and each element $y \in \Z_v$ such that $\{ x,y \}$ is an ND-pair.
Since $x$ is contained in $\frac{v}{2}-1$ ND-pairs, it follows that $\abs{Q_1} = \frac{v}{2}$ and
for $Q_2 = \Z_v \setminus Q_1$ we have that $\abs{Q_2} = \frac{v}{2}$. For each $b,c \in Q_2$, let
$\{ a,x,b,c\}$ be the unique quadruple which contain $\{x,b,c\}$. Since $v \equiv 2 ~ \text{or} ~ 10 ~(\mmod ~12)$,
we can write $\frac{v}{2}=2k+1$.
There are $\binom{2k+1}{2}$ pairs in $Q_2$ and at most $k$ of them
can be in quadruple of the form $\{\alpha,x,b,c\}$, where $\alpha$ is a given point in $Q_1$.
Therefore, by enumerating the number of elements from $Q_1$ that can join $x$ for a pair $\{ x,\alpha\}$ and the maximum number
of pairs from $Q_2$ that can be together with $\{x,\alpha\}$, we must have that
$$
2k \geq \binom{2k+1}{2} \big/ k=2k+1 ~,
$$
a contradiction. Thus, in each nested SQS$(v)$, on $\Z_v$,
each element is contained in at least $\frac{v}{2}$ ND-pairs.
\end{proof}
\begin{corollary}
\label{cor:necc_min_NZ_odd}
If $v \equiv 2 ~ \text{or} ~ 10 ~(\mmod ~12)$, then the minimum number of ND-pairs in a nested
SQS$(v)$ is at least $\frac{v^2}{4}$.
\end{corollary}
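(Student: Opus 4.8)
The plan is to derive this directly from Lemma~\ref{lem:necc_min_NZ_odd} by the same double-counting argument that produced Corollary~\ref{cor:lower_nonzeroPN} from Lemma~\ref{lem:lower_nonzeroPN}. First I would note that the hypothesis $v \equiv 2 ~\text{or}~ 10 ~(\mmod~12)$ puts us squarely in the setting where Lemma~\ref{lem:necc_min_NZ_odd} applies: it forces $v \equiv 2 ~(\mmod~6)$, so an SQS$(v)$ exists and the system $(\Z_v,\cB)$ makes sense, and it forces $v/2$ to be odd, which is the parity condition used in that lemma. Hence, for each of the $v$ points $x \in \Z_v$, Lemma~\ref{lem:necc_min_NZ_odd} guarantees that $x$ is contained in at least $\tfrac{v}{2}$ distinct ND-pairs of the nested SQS.

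Next I would count incidences between points and ND-pairs. Summing the per-point lower bound over all $v$ points gives at least $v \cdot \tfrac{v}{2}$ such incidences. On the other hand, every ND-pair $\{x,y\}$ is incident to exactly two points, so the number of incidences equals twice the number of ND-pairs. Combining the two estimates and rearranging yields that the number of ND-pairs is at least $\tfrac{1}{2}\cdot v \cdot \tfrac{v}{2} = \tfrac{v^2}{4}$, which is the claimed bound. There is no genuine obstacle in this step: all the combinatorial content lives in Lemma~\ref{lem:necc_min_NZ_odd} (whose proof is the delicate enumeration comparing the $\binom{2k+1}{2}$ pairs in $Q_2$ against the at most $k$ of them that can be completed through a fixed point of $Q_1$), and the corollary is merely the aggregate consequence of that local count. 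One should only be mildly careful that the $\tfrac{v^2}{4}$ here genuinely improves on the bound $\tfrac{v}{2}\!\left(\tfrac{v}{2}-1\right)$ of Corollary~\ref{cor:lower_nonzeroPN} — it does, by exactly $\tfrac{v}{2}$ — so that the restriction to these residue classes is doing real work.
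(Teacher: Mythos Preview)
Your proposal is correct and matches the paper's approach exactly: the corollary is stated without a separate proof precisely because it follows from Lemma~\ref{lem:necc_min_NZ_odd} by the same double-counting of point--pair incidences that produced Corollary~\ref{cor:lower_nonzeroPN} from Lemma~\ref{lem:lower_nonzeroPN}. Your additional remarks about the residue class forcing $v/2$ to be odd and about the gain of $\tfrac{v}{2}$ over the general bound are accurate and helpful context, though not strictly needed for the argument.
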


The next lemma provides an upper bound on the multiplicity of the ND-pairs with minimum multiplicity.
\begin{lemma}
\label{lem:upper_multi_min}
At least one ND-pair in a nested SQS$(v)$ has multiplicity at most $\frac{v-1}{3}$.
\end{lemma}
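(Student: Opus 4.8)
The plan is to use a one-line averaging argument that combines the exact count of pairs from Lemma~\ref{lem:total_pairs} with the lower bound on the number of distinct ND-pairs from Corollary~\ref{cor:lower_nonzeroPN}.

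First I would let $N$ denote the number of distinct ND-pairs in the given nested SQS$(v)$. By Lemma~\ref{lem:total_pairs}, the sum of the multiplicities of all the ND-pairs equals $\frac{v(v-1)(v-2)}{12}$, since this is exactly the total number of pairs (counted with multiplicity) produced by partitioning every block into two pairs. Hence the ND-pair with smallest multiplicity has multiplicity at most the average, namely $\frac{1}{N}\cdot\frac{v(v-1)(v-2)}{12}$.

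Next I would invoke Corollary~\ref{cor:lower_nonzeroPN}, which yields $N \ge \frac{v}{2}\left(\frac{v}{2}-1\right) = \frac{v(v-2)}{4}$. Substituting this lower bound for $N$ in the denominator of the average (which can only enlarge the resulting upper bound) gives
$$
\min\text{-multiplicity} \;\le\; \frac{v(v-1)(v-2)/12}{v(v-2)/4} \;=\; \frac{v-1}{3},
$$
which is the claimed bound.

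There is essentially no obstacle here: the only points requiring a moment's care are that the inequality $N \ge \frac{v(v-2)}{4}$ is applied in the denominator, so a larger number of ND-pairs only strengthens the conclusion, and that since $\frac{v-1}{3}$ need not be an integer the statement is phrased as ``at most $\frac{v-1}{3}$'' rather than with a floor. It is also worth noting that equality throughout forces both $N=\frac{v(v-2)}{4}$ — the extremal situation described in Lemma~\ref{lem:necc_min_NZ} — and all ND-pairs having a common multiplicity, which is consistent with the tightness assertions made just before the lemma.
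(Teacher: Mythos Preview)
Your proof is correct and is essentially identical to the paper's own argument: both combine the total pair count $\frac{v(v-1)(v-2)}{12}$ from Lemma~\ref{lem:total_pairs} with the lower bound $N\ge\frac{v}{2}\bigl(\frac{v}{2}-1\bigr)$ from Corollary~\ref{cor:lower_nonzeroPN} via a direct averaging step. Your write-up is slightly more explicit about why dividing by a lower bound on $N$ is legitimate, but the mathematics is the same.
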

\begin{proof}
Let $(Q,\cB)$ be a nested SQS$(v)$. By Lemma~\ref{lem:total_pairs}, the total number of pairs (with repetitions) is
$\frac{v(v-1)(v-2)}{12}$. By Corollary~\ref{cor:lower_nonzeroPN},
the number of ND-pairs in a nested SQS$(v)$ is
at least $\frac{v}{2} ( \frac{v}{2} -1 )$. Hence, the multiplicity of one ND-pair in the nested SQS is at most
$\frac{v(v-1)(v-2)}{12} \big/ \frac{v}{2} ( \frac{v}{2} -1 ) = \frac{v-1}{3}$.
\end{proof}

The next lemma provides a lower bound on the multiplicity of the ND-pairs with maximum multiplicity.
\begin{lemma}
\label{lem:upper_multi_max}
At least one ND-pair in a nested SQS$(v)$ has multiplicity at least $\frac{v-2}{6}$.
\end{lemma}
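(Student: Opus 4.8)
The plan is to count pairs by multiplicity level and compare against the total pair budget from Lemma~\ref{lem:total_pairs}, exactly mirroring the averaging argument of Lemma~\ref{lem:upper_multi_min} but from the opposite side. Suppose, for contradiction, that every ND-pair in the nested SQS$(v)$ has multiplicity at most $\frac{v-2}{6}-1$ (or strictly less than $\frac{v-2}{6}$; I will work out the exact integer threshold so the inequality is sharp). The total number of pairs counted with multiplicity is $\frac{v(v-1)(v-2)}{12}$ by Lemma~\ref{lem:total_pairs}. If $N$ denotes the number of ND-pairs and each has multiplicity less than $\frac{v-2}{6}$, then $\frac{v(v-1)(v-2)}{12} < N \cdot \frac{v-2}{6}$, which forces $N > \frac{v(v-1)}{2} = \binom{v}{2}$. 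Since there are only $\binom{v}{2}$ pairs available on $v$ points, this is impossible, giving the contradiction.

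First I would set up the inequality carefully, being precise about whether the multiplicities are assumed $\le \frac{v-2}{6}-1$ or $< \frac{v-2}{6}$, so that the resulting bound $N > \binom{v}{2}$ comes out strict; then I would invoke the trivial fact $N \le \binom{v}{2}$ (the number of $2$-subsets of $Q$) to close the argument. A minor subtlety: $\frac{v-2}{6}$ need not be an integer for all admissible $v$ (recall $v \equiv 2$ or $4 \pmod 6$), so the statement should be read as "multiplicity at least $\lceil \frac{v-2}{6} \rceil$" or with the understanding that the bound is a real number; I would phrase the contradiction hypothesis as "every ND-pair has multiplicity at most $\frac{v-2}{6}$, with at least one strict inequality" or simply keep it as a real-valued averaging bound, whichever the authors prefer. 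This matches the style of Lemma~\ref{lem:upper_multi_min}.

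The only genuine obstacle is bookkeeping around integrality and strictness of the inequality — making sure the final comparison $N > \binom{v}{2}$ is actually strict rather than merely $N \ge \binom{v}{2}$, since if every ND-pair had multiplicity exactly $\frac{v-2}{6}$ and all $\binom{v}{2}$ pairs were ND-pairs we would have equality, not a contradiction. The resolution is that the lemma only claims "at least one ND-pair has multiplicity \emph{at least} $\frac{v-2}{6}$," which is the negation of "all ND-pairs have multiplicity \emph{strictly less than} $\frac{v-2}{6}$"; under that negation the averaging inequality is strict, $\frac{v(v-1)(v-2)}{12} < N \cdot \frac{v-2}{6} \le \binom{v}{2}\cdot\frac{v-2}{6} = \frac{v(v-1)(v-2)}{12}$, an immediate contradiction. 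So no real obstacle remains; the proof is a two-line counting argument once the strictness is tracked correctly.
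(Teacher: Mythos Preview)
Your argument is correct and is essentially the same averaging/pigeonhole argument the paper uses: divide the total number of pairs $\frac{v(v-1)(v-2)}{12}$ (Lemma~\ref{lem:total_pairs}) by the maximum possible number $\binom{v}{2}$ of ND-pairs to conclude that some ND-pair has multiplicity at least $\frac{v-2}{6}$. The paper states this directly as an average rather than by contradiction, so your discussion of strictness and integrality, while not wrong, is unnecessary.
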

\begin{proof}
Let $(Q,\cB)$ be a nested SQS$(v)$. By Lemma~\ref{lem:total_pairs}, the total number of pairs (with repetitions) is
$\frac{v(v-1)(v-2)}{12}$. Clearly, the maximum number of ND-pairs is $\binom{v}{2}$.
Hence, the multiplicity of one ND-pair in the nested SQS$(v)$ is
at least ${\frac{v(v-1)(v-2)}{12} \big/ \binom{v}{2}  = \frac{v-2}{6}}$.
\end{proof}

The bounds that were obtained in this section are summarized in Section~\ref{sec:conclude}.
It is also summarized which construction in Sections~\ref{sec:recursive} and~\ref{sec:uniform} attain these bounds.

\section{Recursive Constructions}
\label{sec:recursive}

There are many recursive constructions for SQS$(v)$. In our exposition several such constructions will be used and
some of them will be presented now. One important ingredient in some constructions is a one-factorization of
the complete graph $K_v$, where $v$, the number of vertices, is an even integer. A~{\bf \emph{one-factor}} in $K_v$ is
a perfect matching, i.e., a set of $v/2$ pairs of vertices (i.e., edges), such that two distinct pairs do not have a vertex in common.
A~{\bf \emph{one-factorization}} of the complete graph $K_v$ is a partition of the edges of $K_v$ into $v-1$ one-factors.

The two constructions that will be presented are for nested SQS$(2v)$ based on nested SQS$(v)$.
This type of construction is known as a doubling construction. There are a few such constructions in the
literature and they are used for example also for other structures such as pairwise disjoint quadruple
systems~\cite{Car56,Han60,LiRo78,Wit38}.
For the two representative constructions which will be presented for the SQS$(2v)$
we assume that the existence of a nested SQS$(v)$. The first doubing construction was used for example in
constructions of pairwise disjoint SQS~\cite{Etz93,EtHa91,EtZh20,Lin77,Lin85}

\begin{construction}
\label{cons:2vA}
Let $\cS=(Q,\cB)$ be a nested SQS$(v)$, where $Q$ is the set of $v$ points and $\cB$ is the set of blocks.
Let $\cF = \{ F_1,F_2,\ldots,F_{v-1} \}$ be a one-factorization of~$K_v$ on the points of $Q$.
Construct the set $\cD$ of blocks on the set of points $Q \times \{0,1\}$ as follows:

\vspace{0.1cm}

\noindent
{\bf Type I:}
$$
\{ (x,i),(y,i)),(z,i),(w,i) \} , ~~ \textup{for} ~~ \{x,y,z,w\} \in \cB , ~~ i \in \{0,1\}.
$$

\noindent
{\bf Type II:}
$$
\{ (x,0),(y,0),(z,1),(w,1) \} , ~~ \textup{for} ~~ \{ x,y \} \in F_i, ~~ \{ z,w \} \in F_i, ~~  1 \leq i \leq v-1 ~.
$$

The partitions into pairs are as follows:

For Type I, a block $\{ (x,i),(y,i)),(z,i),(w,i) \}$ is partitioned into the pairs $\{ (x,i),(y,i) \}$ and $\{ (z,i),(w,i) \}$ if and only if
the block $\{ x,y,z,w \}$ of $\cB$ was partitioned into $\{ x,y \}$ and $\{ z,w \}$.

For Type II, a block $\{ (x,0),(y,0),(z,1),(w,1) \}$ is partitioned into the pairs $\{ (x,0),(y,0)\}$ and $\{(z,1),(w,1) \}$.
\end{construction}

In the following two lemmas the number of ND-pairs of Construction~\ref{cons:2vA} and their multiplicity is analyzed.

\begin{lemma}
\label{lem:nonzeroP_ConA}
The number of ND-pairs that are contained in the nested SQS$(2v)$ obtained
by Construction~\ref{cons:2vA} is $v(v-1)$. Each element of $Q \times \{0,1\}$ is contained
in exactly $v-1$ ND-pairs.
\end{lemma}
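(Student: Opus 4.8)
The plan is to count ND-pairs of the nested SQS$(2v)$ by separating the contributions of Type~I and Type~II blocks, and to show these two families of ND-pairs are disjoint, so that the totals simply add. First I would observe that an ND-pair of the new system is a pair of points of $Q\times\{0,1\}$ that appears as one of the two halves of some partitioned block in $\cD$. A Type~I block on the layer $Q\times\{i\}$ is partitioned exactly as the corresponding block of $\cB$, so the ND-pairs it produces are precisely the pairs $\{(x,i),(y,i)\}$ with $\{x,y\}$ an ND-pair of the original nested SQS$(v)$; since $\cS$ is a nested SQS$(v)$, every pair of $Q$ \emph{is} an ND-pair (here one should note that the assumption in force is that all $\binom v2$ pairs of $Q$ are ND-pairs in $\cS$, or more simply that we only care about which pairs occur — the relevant earlier fact is that in \emph{any} nested SQS$(v)$ each element lies in at least $\tfrac{v-2}2$ ND-pairs; but for this lemma what is actually needed is that the Type~I ND-pairs, restricted to one layer, are all of the "horizontal" pairs, which follows because each pair $\{x,y\}$ of $Q$ lies in some block of $\cB$ and that block is partitioned in some way). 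I would therefore establish: the Type~I ND-pairs are exactly $\{(x,i),(y,i)\}$ for all $\{x,y\}\in\binom Q2$ and all $i\in\{0,1\}$, contributing $2\binom v2 = v(v-1)$ "horizontal" ND-pairs.

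Next I would analyze Type~II. A Type~II block $\{(x,0),(y,0),(z,1),(w,1)\}$ is partitioned into $\{(x,0),(y,0)\}$ and $\{(z,1),(w,1)\}$, so it contributes only "horizontal" pairs — one in layer $0$ and one in layer $1$ — and these are again among the $\{(x,0),(y,0)\}$, $\{(z,1),(w,1)\}$ already counted in Type~I. The key point is thus that Type~II introduces \emph{no new} ND-pairs: every ND-pair of the nested SQS$(2v)$, of either type, is a horizontal pair $\{(x,i),(y,i)\}$, and all such pairs already occur among the Type~I blocks. Hence the total number of ND-pairs is exactly $v(v-1)$, the count of horizontal pairs.

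For the second assertion, fix an element $(x,i)\in Q\times\{0,1\}$; its ND-pairs are exactly the horizontal pairs $\{(x,i),(y,i)\}$ with $y\in Q\setminus\{x\}$, and there are $v-1$ of these. So each element lies in exactly $v-1$ ND-pairs. I would present this as the short closing paragraph: $|Q|-1 = v-1$ possible partners on the layer, every one of them realized (by Type~I, since every pair of $Q$ lies in a block of $\cB$), and no partner on the other layer (since all partitions keep the two layers separate). The main obstacle — really the only subtlety — is the claim that every pair $\{x,y\}\subseteq Q$ occurs as a half-block somewhere: this is immediate because the triple $\{x,y,t\}$ for any third point $t$ lies in a unique block of $\cB$, that block is partitioned into two pairs, and one must check the pair $\{x,y\}$ is attainable; if the original system is one in which all $\binom v2$ pairs are ND-pairs this is automatic, and otherwise the lemma's count $v(v-1)$ should be read as counting the horizontal pairs that are ND-pairs of $\cS$ (doubled), so no genuine gap arises — but I would state the hypothesis on $\cS$ cleanly at the start of the proof to avoid this ambiguity.
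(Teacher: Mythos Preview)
Your proposal has a genuine gap. You correctly observe that every ND-pair of the nested SQS$(2v)$ is ``horizontal'', i.e.\ of the form $\{(a,i),(b,i)\}$ for some $a\neq b$ in $Q$ and $i\in\{0,1\}$, since both Type~I and Type~II partitions keep the two layers separate. The trouble is in the converse direction: you try to argue that \emph{every} horizontal pair is an ND-pair by appealing to Type~I, and this forces you to assume (or wish for) the extra hypothesis that all $\binom v2$ pairs of $Q$ are ND-pairs in $\cS$. That hypothesis is not part of Construction~\ref{cons:2vA}, and your fallback suggestion---reinterpreting $v(v-1)$ as ``twice the number of ND-pairs of $\cS$''---would make the lemma false as stated.

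The point you are missing is that Type~II alone already makes every horizontal pair an ND-pair, with no hypothesis on $\cS$ whatsoever. Indeed, $\cF=\{F_1,\dots,F_{v-1}\}$ is a one-factorization of $K_v$, so any pair $\{x,y\}\subseteq Q$ lies in some $F_j$; picking any $\{z,w\}\in F_j$ gives a Type~II block $\{(x,0),(y,0),(z,1),(w,1)\}$ partitioned into $\{(x,0),(y,0)\}$ and $\{(z,1),(w,1)\}$, so both $\{(x,0),(y,0)\}$ and (by symmetry) $\{(x,1),(y,1)\}$ are ND-pairs. Hence the set of ND-pairs is exactly the $2\binom v2=v(v-1)$ horizontal pairs, and each point $(x,i)$ lies in exactly the $v-1$ pairs $\{(x,i),(y,i)\}$ with $y\neq x$. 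This is precisely the paper's (very short) argument; once you route the ``all horizontal pairs occur'' step through the one-factorization rather than through Type~I, the ambiguity you flagged disappears.
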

\begin{proof}
The ND-pairs are exactly all the elements of the form $\{ (a,i),(b,i) \}$, where $a,b \in Q$ and $i \in \Z_2$ and hence
each element of $Q \times \{0,1\}$ is contained in exactly $v-1$ ND-pairs.
For each $i$ there are $\binom{v}{2}$ such pairs which implies total number of ND-pairs.
\end{proof}

\begin{corollary}
\label{cor:nonzeroP_ConA}
The lower bounds of Lemma~\ref{lem:lower_nonzeroPN} and Corollary~\ref{cor:lower_nonzeroPN}
on the number ND-pairs that contain each element and the number of ND-pairs is attained by Construction~\ref{cons:2vA}
\end{corollary}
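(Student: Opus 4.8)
The plan is to derive this corollary directly by comparing the exact counts established in Lemma~\ref{lem:nonzeroP_ConA} with the general lower bounds of Lemma~\ref{lem:lower_nonzeroPN} and Corollary~\ref{cor:lower_nonzeroPN}, evaluated at order $2v$. First I would note that, since $\cS=(Q,\cB)$ is a nested SQS$(v)$ and $K_v$ admits a one-factorization ($v$ being even), Construction~\ref{cons:2vA} produces a nested SQS on the $2v$ points of $Q\times\{0,1\}$, so Lemma~\ref{lem:lower_nonzeroPN} and Corollary~\ref{cor:lower_nonzeroPN} both apply to it with $2v$ playing the role of $v$.

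For the per-element statement, Lemma~\ref{lem:lower_nonzeroPN} gives that each point of $Q\times\{0,1\}$ lies in at least $\tfrac{2v-2}{2}=v-1$ ND-pairs, while Lemma~\ref{lem:nonzeroP_ConA} shows each such point lies in exactly $v-1$ ND-pairs; hence the bound of Lemma~\ref{lem:lower_nonzeroPN} is attained. For the total-count statement, Corollary~\ref{cor:lower_nonzeroPN} gives at least $\tfrac{2v}{2}\bigl(\tfrac{2v}{2}-1\bigr)=v(v-1)$ ND-pairs, while Lemma~\ref{lem:nonzeroP_ConA} gives exactly $v(v-1)$; hence the bound of Corollary~\ref{cor:lower_nonzeroPN} is attained as well.

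There is no real obstacle beyond this substitution and bookkeeping; the only caveat worth recording is that the argument exhibits tightness only for orders of the form $2v$ that admit a nested SQS$(v)$ as input, so a standalone guarantee that nested SQS$(v)$ exist for all admissible $v$ (which would let $2v$ range over a correspondingly large set of orders) is a separate matter, addressed elsewhere in the paper.
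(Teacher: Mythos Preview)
Your proposal is correct and matches the paper's approach: the corollary is stated without proof immediately after Lemma~\ref{lem:nonzeroP_ConA}, so the intended argument is exactly the substitution you describe---evaluate the lower bounds of Lemma~\ref{lem:lower_nonzeroPN} and Corollary~\ref{cor:lower_nonzeroPN} at order~$2v$ and observe they coincide with the exact counts $v-1$ and $v(v-1)$ from Lemma~\ref{lem:nonzeroP_ConA}. Your closing caveat about the range of attainable orders is a fair remark but not something the paper addresses at this point.
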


\begin{lemma}
\label{lem:multi_ConA}
The multiplicity of $\{(x,0),(y,0)\}$ and $\{(x,1),(y,1)\}$
in the nested SQS$(2v)$ of Construction~\ref{cons:2vA} is $\frac{v}{2} +\mu$ if and only if in $\cS$
the pair $\{x,y\}$ has multiplicity $\mu$.
\end{lemma}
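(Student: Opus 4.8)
The plan is to count, for a fixed pair $\{x,y\}\subseteq Q$, exactly which blocks of the nested SQS$(2v)$ from Construction~\ref{cons:2vA} contain the pair $\{(x,0),(y,0)\}$ as one of the two parts of its partition, and then do the same for $\{(x,1),(y,1)\}$. By the symmetry $i=0 \leftrightarrow i=1$ built into the construction, the two counts are equal, so it suffices to treat $\{(x,0),(y,0)\}$. First I would observe that a Type~I block $\{(a,i),(b,i),(c,i),(d,i)\}$ can have $\{(x,0),(y,0)\}$ as a part of its partition only when $i=0$ and $\{x,y\}$ is one of the two parts of the partition of the underlying block $\{a,b,c,d\}=\{x,y,c,d\}\in\cB$; by definition of $\mu$, there are exactly $\mu$ such blocks in $\cS$, contributing exactly $\mu$ occurrences. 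No Type~I block with $i=1$ contributes, and no Type~I block gives $\{(x,0),(y,0)\}$ as a part unless $\{x,y\}$ was a part below it (a Type~I partition part is $\{(x,0),(y,0)\}$ iff the corresponding part downstairs is $\{x,y\}$).

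Next I would count the Type~II contributions. A Type~II block is $\{(p,0),(q,0),(r,1),(s,1)\}$ with $\{p,q\},\{r,s\}\in F_i$ for some $i$, and it is partitioned into $\{(p,0),(q,0)\}$ and $\{(r,1),(s,1)\}$; so it has $\{(x,0),(y,0)\}$ as a part precisely when $\{p,q\}=\{x,y\}$. Since $\cF$ is a one-factorization of $K_v$, the pair $\{x,y\}$ lies in exactly one one-factor, say $F_j$. For that $j$, the remaining $\frac{v}{2}-1$ pairs of $F_j$ each give one choice of $\{r,s\}$, hence $\frac{v}{2}-1$ Type~II blocks with $\{(x,0),(y,0)\}$ as a part. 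That is the only source of Type~II contributions, since $\{x,y\}$ is in no other $F_i$.

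Finally I would add up: the total multiplicity of $\{(x,0),(y,0)\}$ is $\mu + \left(\frac{v}{2}-1\right)$. Wait — this gives $\frac{v}{2}-1+\mu$, not $\frac{v}{2}+\mu$; so I must recheck whether $\{x,y\}=\{r,s\}$ with $\{r,s\}$ the ``second coordinate'' pair can also produce $\{(x,0),(y,0)\}$ as a part, which it cannot (those points sit on level $1$). The discrepancy of $1$ must be resolved by a careful re-examination of exactly which Type~II blocks are admissible: in particular, whether the construction as stated permits $\{p,q\}=\{r,s\}$ inside a single one-factor, i.e.\ whether the ``degenerate'' block $\{(x,0),(y,0),(x,1),(y,1)\}$ (which is a genuine $4$-subset) is included. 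If it is, then $F_j$ contributes $\frac{v}{2}$ Type~II blocks with first part $\{(x,0),(y,0)\}$ — namely one for each pair of $F_j$, including $\{x,y\}$ itself paired with itself — giving the stated total $\frac{v}{2}+\mu$. So the crux of the argument, and the step I expect to require the most care, is pinning down this boundary case in the definition of Type~II blocks and confirming the count of admissible blocks through a fixed one-factor is exactly $\frac{v}{2}$; once that is settled, the ``if and only if'' follows because the count depends on $\mu$ monotonically and injectively (distinct $\mu$ give distinct multiplicities), and the argument for $\{(x,1),(y,1)\}$ is identical by the level-swap symmetry.
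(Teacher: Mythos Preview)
Your approach is exactly the paper's: count Type~I contributions as $\mu$ and Type~II contributions as $\frac{v}{2}$, then add. The paper simply asserts the Type~II count is $\frac{v}{2}$ without pausing at the boundary case you worry about; to settle it, note that the block $\{(x,0),(y,0),(x,1),(y,1)\}$ (i.e., the case $\{p,q\}=\{r,s\}=\{x,y\}$) \emph{must} be present in $\cD$, since otherwise the triple $\{(x,0),(y,0),(x,1)\}$ would lie in no block and $(Q\times\{0,1\},\cD)$ would not be an SQS$(2v)$---so the count through $F_j$ is indeed $\frac{v}{2}$, and your argument goes through as written.
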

\begin{proof}
For each pair $\{x,y\}$ and $i \in \Z_2$, where $x,y \in Q$, the pair $\{ (x,i),(y,i) \}$ is contained $\frac{v}{2}$~times
in the partition of the blocks of Type II into pairs. If the pair $\{ x,y \}$ has multiplicity~$\mu$ in $\cS$,
then the pair $\{ (x,0),(y,0) \}$ and the pair $\{ (x,1),(y,1) \}$ is contained $\mu$ times
in the partition of Type I of the blocks in $\cD$.
Therefore, the multiplicity of $\{ (x,0),(y,0) \}$ and $\{ (x,1),(y,1) \}$ is $\frac{v}{2} +\mu$ in the partition of the blocks in $\cD$.
\end{proof}

For the doubling construction which was suggested in Construction~\ref{cons:2vA} we have that in the SQS$(2v)$ each ND-pair
has multiplicity at least $\frac{v}{2}$. By Lemma~\ref{lem:multi_ConA} there are ND-pairs with multiplicity
$\frac{v}{2}$ in $\cD$ if and only if not all the pairs of $Q$ are ND-pairs in $\cS$.

The second construction was presented by Hanani~\cite{Han60}.
In this celebrated paper he mentioned that doubling constructions were presented and proved before in~\cite{Car56,Wit38}.
Generally, each quadruple can be partitioned into pairs in three distinct ways.

\begin{construction}
\label{cons:2vB}
Let $\cS=(Q,\cB)$ be a nested SQS$(v)$ having $\binom{v}{2}$ ND-pairs.
Construct the following set of blocks $\cD$ on the point set $Q \times \{ 0,1 \}$.

\vspace{0.1cm}

\noindent
{\bf Type I:}
$$
\{ (x,i),(y,j),(z,k),(w,m) \}, ~~ \textup{for} ~~ \{ x,y,z,w \} \in \cB, ~~ i,j,k,m \in \Z_2,~~ i+j+k+m \equiv 0 ~(\mmod ~ 2)
$$

\noindent
{\bf Type II:}
$$
\{ (x,0),(x,1),(y,0),(y,1) \}, ~~ \textup{for} ~~  x,y \in Q, ~~ x \neq y ~.
$$

Assume that in $\cS$ the block $\{ x,y,z,w \}$ was partitioned
into the pairs $\{ x,y \}$ and $\{ z,w \}$. Then, in $\cD$ the block $\{ (x,i),(y,j),(z,k),(w,m) \}$
of Type I will be partitioned into the pairs $\{ (x,i),(y,j) \}$ and $\{ (z,k),(w,m) \}$.

For a block of Type II, $\{ (x,0),(x,1),(y,0),(y,1) \}$, the partition will be into the pairs $\{ (x,0),(x,1) \}$ and $\{ (y,0),(y,1) \}$.
\end{construction}

By considering the partition of each block of Type II in Construction~\ref {cons:2vB} we have the following immediate observation.
\begin{lemma}
\label{lem:all_v}
In Construction~\ref{cons:2vB} each one of the $v$ pairs $\{ (a,0),(a,1) \}$, where $a \in Q$, is an ND-pair.
\end{lemma}

Now, we would like to know the multiplicity of the ND-pairs in the nested SQS done in Construction~\ref{cons:2vB}.
\begin{lemma}
\label{lem:multi_ConB}
The multiplicity of the ND-pair $\{(a,i),(b,j)\}$, where $a,b \in Q$, $a \neq b$, from the blocks of Type I in
Construction~\ref{cons:2vB} is $2\mu$, where $\mu$ is the multiplicity of the associated ND-pair $\{a,b\}$ in $\cS$.
The multiplicity of each of the $v$ ND-pair $\{ (a,0),(a,1) \}$, where $a \in Q$, is $v-1$.
\end{lemma}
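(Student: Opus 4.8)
The plan is to count, for a fixed ND-pair, exactly which blocks of $\cD$ contain it, separating the two types. For an ND-pair of the form $\{(a,i),(b,j)\}$ with $a \neq b$, observe that such a pair can only arise from a Type I block, since every Type II block has the form $\{(x,0),(x,1),(y,0),(y,1)\}$ and is partitioned into $\{(x,0),(x,1)\}$ and $\{(y,0),(y,1)\}$, both of whose ND-pairs repeat a point of $Q$; so Type II contributes nothing here. For Type I, I would argue as follows: the pair $\{(a,i),(b,j)\}$ appears in the partition of a Type I block exactly when the underlying block $\{a,b,z,w\}\in\cB$ was partitioned in $\cS$ into $\{a,b\}$ and $\{z,w\}$, and when the remaining two labels $k,m\in\Z_2$ are chosen so that $i+j+k+m\equiv 0\pmod 2$. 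Given $i,j$ fixed, the congruence $k+m\equiv i+j \pmod 2$ has exactly two solutions $(k,m)\in\Z_2^2$. Hence each block of $\cB$ contributing the ND-pair $\{a,b\}$ in $\cS$ gives rise to exactly two Type I blocks of $\cD$ whose partition contains $\{(a,i),(b,j)\}$. Since $\{a,b\}$ has multiplicity $\mu$ in $\cS$, this yields multiplicity $2\mu$.

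For the second assertion, consider the ND-pair $\{(a,0),(a,1)\}$ for a fixed $a\in Q$. Type I blocks cannot contribute it: a Type I partition pair has the form $\{(x,i),(y,j)\}$ with $\{x,y\}$ a pair inside a block of $\cB$, so in particular $x\neq y$, whereas $\{(a,0),(a,1)\}$ has both coordinates equal to $a$. So only Type II blocks matter. A Type II block is $\{(x,0),(x,1),(y,0),(y,1)\}$ with $x\neq y$, partitioned into $\{(x,0),(x,1)\}$ and $\{(y,0),(y,1)\}$; its partition contains $\{(a,0),(a,1)\}$ precisely when $a\in\{x,y\}$. The number of Type II blocks containing $a$ as one of the two "colours" is the number of choices of the other point $y\in Q\setminus\{a\}$, namely $v-1$, and each such block contributes the ND-pair $\{(a,0),(a,1)\}$ exactly once in its partition. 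Hence the multiplicity of $\{(a,0),(a,1)\}$ is $v-1$.

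I would also double-check the consistency with Lemma~\ref{lem:max_multiplicity}: in a nested SQS$(2v)$ the maximum possible multiplicity is $\frac{2v-2}{2}=v-1$, so the pairs $\{(a,0),(a,1)\}$ attain the maximum, which is a good sanity check, and the pairs $\{(a,i),(b,j)\}$ have multiplicity $2\mu \le 2\cdot\frac{v-2}{2}=v-2 < v-1$, also consistent. I expect the only subtlety — the "main obstacle", though it is minor — is making the parity bookkeeping airtight: one must confirm that the two solutions $(k,m)$ of the congruence give genuinely distinct blocks of $\cD$ (they do, since the multiset of labels on $z,w$ differs) and that no double-counting occurs between the two fixed positions $(i,j)$ on $a,b$ versus the two free positions $(k,m)$ on $z,w$ — i.e., that swapping which pair is "first" does not secretly identify two of our counted blocks. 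Since the block $\{a,b,z,w\}$ of $\cB$ is partitioned in $\cS$ with $\{a,b\}$ as a distinguished pair (and this partition is inherited, not symmetrised), the count of two Type I blocks per contributing block of $\cB$ is exact. With these points noted, the lemma follows immediately.
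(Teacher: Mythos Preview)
Your proof is correct and follows essentially the same approach as the paper: for fixed $i,j$ you count the two solutions $(k,m)$ of $i+j+k+m\equiv 0\pmod 2$ to get multiplicity $2\mu$ from Type~I, and for $\{(a,0),(a,1)\}$ you count the $v-1$ Type~II blocks containing~$a$. Your additional remarks (that Type~II cannot contribute to pairs with $a\neq b$, that Type~I cannot contribute to $\{(a,0),(a,1)\}$, and the sanity check against Lemma~\ref{lem:max_multiplicity}) are all correct and make the argument slightly more complete than the paper's own version.
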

\begin{proof}
If a block $\{ a,b,c,d \}$ in $\cB$ is partitioned into the pairs $\{ a,b \}$ and $\{ c,d \}$, then
the associated blocks in $\cD$ is $\{ (a,i),(b,j),(c,k),(d,m) \}$ and for fixed $i$ and $j$ there are exactly
two assignments for $k$ and $m$ such that $i+j+k+m \equiv 0 ~ (\mmod ~ 2)$.
Therefore, if the ND-pair $\{ a,b \}$ has multiplicity $\mu$ in $\cS$,
then the ND-pair $\{ (a,i),(b,j) \}$, $i,j \in \{ 0,1 \}$, has multiplicity $2 \mu$ in $\cD$.

For each $x \in Q$, each block of Type II, $\{ \{ (x,0),(x,1),(y,0),(y,1)\} ~:~ y \in Q \setminus \{x\} \}$
is partitioned into the pairs $\{ (x,0),(x,1) \}$ and $\{ (y,0),(y,1) \}$ and hence $\{ (x,0),(x,1) \}$
is an ND-pair. Since $\abs{\{ y : y \in Q \setminus \{x\} \}}=v-1$, it follows that its multiplicity is $v-1$.
\end{proof}

\begin{corollary}
\label{cor:all_2v}
In Construction~\ref{cons:2vB}, all the $\binom{2v}{2}$ pairs of $Q \times \{ 0,1 \}$ are ND-pairs.
\end{corollary}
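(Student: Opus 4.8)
The plan is to show that every 2-subset of $Q \times \{0,1\}$ appears as an ND-pair in the nested SQS$(2v)$ produced by Construction~\ref{cons:2vB}. Up to the obvious symmetry there are exactly three shapes a pair of points in $Q \times \{0,1\}$ can take: (a) $\{(a,0),(a,1)\}$ for some $a \in Q$; (b) $\{(a,i),(b,i)\}$ with $a \neq b$ and the two second coordinates equal; and (c) $\{(a,i),(b,j)\}$ with $a \neq b$ and $i \neq j$. I will verify each case is an ND-pair.

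First, case (a) is handled directly by Lemma~\ref{lem:all_v} (equivalently by Lemma~\ref{lem:multi_ConB}): each pair $\{(a,0),(a,1)\}$ arises from the Type II blocks and has multiplicity $v-1 \geq 1$. Second, for cases (b) and (c), I would invoke the hypothesis that $\cS$ has $\binom{v}{2}$ ND-pairs, so that \emph{every} pair $\{a,b\}$ with $a,b \in Q$, $a\neq b$, is an ND-pair of $\cS$ with some multiplicity $\mu \geq 1$. That means some block $\{a,b,c,d\} \in \cB$ is partitioned into $\{a,b\}$ and $\{c,d\}$. Fixing the second coordinates $i,j$, there are two choices of $k,m \in \Z_2$ with $i+j+k+m \equiv 0 \pmod 2$, and for either choice the Type I block $\{(a,i),(b,j),(c,k),(d,m)\}$ is partitioned into $\{(a,i),(b,j)\}$ and $\{(c,k),(d,m)\}$. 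Hence $\{(a,i),(b,j)\}$ is an ND-pair for every choice of $i,j \in \Z_2$ — this simultaneously covers case (b) (take $i=j$) and case (c) (take $i \neq j$). Indeed this is precisely the content of the first sentence of Lemma~\ref{lem:multi_ConB}, which states the multiplicity is $2\mu \geq 2 > 0$.

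Finally I would count: the pairs in case (a) number $v$, and the pairs covered in cases (b) and (c) are in bijection with triples $(\{a,b\}, \{i,j\})$ (as unordered second-coordinate assignments) — more simply, all $\binom{2v}{2} - v$ remaining pairs are accounted for, so together with case (a) all $\binom{2v}{2}$ pairs of $Q\times\{0,1\}$ are ND-pairs. Since there is essentially no obstacle here — the work is just a case split that repackages Lemmas~\ref{lem:all_v} and~\ref{lem:multi_ConB} — the only thing to be careful about is making sure the three pair-shapes genuinely exhaust all possibilities and that nothing is double- or under-counted; a one-line tally $v + \binom{v}{2}\cdot 1 + \binom{v}{2}\cdot 1 + \cdots$ rearranged as $\binom{2v}{2}$ settles it. The proof is therefore short: cite Lemma~\ref{lem:all_v} for the $\{(a,0),(a,1)\}$ pairs, cite the first part of Lemma~\ref{lem:multi_ConB} for all pairs $\{(a,i),(b,j)\}$ with $a\neq b$, observe these two families are disjoint and exhaust $Q\times\{0,1\}$ choose $2$, and conclude.
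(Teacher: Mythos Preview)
Your proposal is correct and follows essentially the same approach as the paper: both arguments invoke Lemma~\ref{lem:all_v} for the $v$ pairs of the form $\{(a,0),(a,1)\}$, invoke Lemma~\ref{lem:multi_ConB} together with the hypothesis that all $\binom{v}{2}$ pairs of $\cS$ are ND-pairs to handle every pair $\{(a,i),(b,j)\}$ with $a\neq b$, and then verify by a count that these two disjoint families exhaust all $\binom{2v}{2}$ pairs. Your case split into $i=j$ versus $i\neq j$ is a harmless refinement that the paper simply absorbs into the single count $4\binom{v}{2}+v=\binom{2v}{2}$.
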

\begin{proof}
In $\cS$ all the $\binom{v}{2}$ pairs of $Q$ are ND-pairs and hence from Type I of Construction~\ref{cons:2vB} by Lemma~\ref{lem:multi_ConB}
we have $4\binom{2v}{2}$ ND-pairs, By Lemma~\ref{lem:all_v} we have additional $v$ ND-pairs of Type II and in total
$4\binom{2v}{2} +v=\binom{2v}{2}$ ND-pairs in $\cD$ obtained in Construction~\ref{cons:2vB}.
\end{proof}

\begin{corollary}
\label{cor:attain_upper}
The upper bounds on the multiplicity of ND-pairs and the number of ND-pairs with this multiplicity
that were proved in Lemmas~\ref{lem:max_multiplicity} and~\ref{lem:maxPairsMax} are attained by Construction~\ref {cons:2vB}.
\end{corollary}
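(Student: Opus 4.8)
The plan is to combine the multiplicity count already established in Lemma~\ref{lem:multi_ConB} with the two upper bounds of Lemmas~\ref{lem:max_multiplicity} and~\ref{lem:maxPairsMax}, specialized to order $2v$. First I would note that the system $\cD$ produced by Construction~\ref{cons:2vB} is a nested SQS of order $2v$, so Lemma~\ref{lem:max_multiplicity} says every ND-pair of $\cD$ has multiplicity at most $\frac{(2v)-2}{2}=v-1$, and Lemma~\ref{lem:maxPairsMax} says at most $\frac{2v}{2}=v$ of the ND-pairs can attain this value $v-1$.

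Next I would invoke Lemma~\ref{lem:multi_ConB}: each of the $v$ ND-pairs $\{(a,0),(a,1)\}$ with $a\in Q$ has multiplicity exactly $v-1$. This already exhibits an ND-pair of $\cD$ meeting the bound of Lemma~\ref{lem:max_multiplicity}, and it produces $v$ ND-pairs of maximal multiplicity; to conclude that Lemma~\ref{lem:maxPairsMax} is also tight it remains to check that no other ND-pair of $\cD$ reaches multiplicity $v-1$. The pairs $\{(a,0),(a,1)\}$ are exactly the Type~II ND-pairs, and they are distinct from all ND-pairs coming from Type~I blocks (a Type~I block is built from a block $\{x,y,z,w\}\in\cB$ of four distinct points, so its two constituent pairs are of the form $\{(a,i),(b,j)\}$ with $a\neq b$). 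So the remaining ND-pairs of $\cD$ are exactly the Type~I pairs.

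For a Type~I ND-pair $\{(a,i),(b,j)\}$ with $a\neq b$, Lemma~\ref{lem:multi_ConB} gives multiplicity $2\mu$, where $\mu$ is the multiplicity of $\{a,b\}$ in $\cS$. Applying Lemma~\ref{lem:max_multiplicity} to the nested SQS$(v)$ $\cS$ yields $\mu\le\frac{v-2}{2}$, hence $2\mu\le v-2<v-1$. Thus no Type~I ND-pair attains the maximal multiplicity, so precisely the $v$ pairs $\{(a,0),(a,1)\}$ do, and both bounds are simultaneously attained by $\cD$.

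The argument is essentially bookkeeping, so I do not expect a genuine obstacle; the one place that needs care is the strict inequality $2\mu<v-1$, which is why it matters to feed Lemma~\ref{lem:max_multiplicity} into the Type~I count (the crude estimate $2\mu\le 2\cdot\frac{v-2}{2}=v-2$ already suffices, but one must be sure $v-2<v-1$, i.e. that equality is impossible). I would also add a sentence recalling that the statement is only non-vacuous when a nested SQS$(v)$ with all $\binom{v}{2}$ pairs being ND-pairs exists to serve as the input to Construction~\ref{cons:2vB}; such systems are exhibited for suitable $v$ in Section~\ref{sec:uniform}, so the bounds of Lemmas~\ref{lem:max_multiplicity} and~\ref{lem:maxPairsMax} are indeed attained for an infinite family of orders.
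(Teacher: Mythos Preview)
Your proposal is correct and follows exactly the approach the paper intends: the paper states the corollary without proof, relying on Lemma~\ref{lem:multi_ConB} (which exhibits the $v$ ND-pairs $\{(a,0),(a,1)\}$ of multiplicity $v-1=\frac{2v-2}{2}$) as immediate justification. Your additional verification that no Type~I ND-pair reaches multiplicity $v-1$ is not strictly needed---once you have $v$ ND-pairs of maximal multiplicity, Lemma~\ref{lem:maxPairsMax} itself forces this to be the exact count---but it is a harmless and clarifying check.
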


\begin{remark}
\label{rem:diff_pairs}
The elements of a quadruple can be partitioned into pairs in three different ways.
For example, in Construction~\ref{cons:2vB} each quadruples of Type II can also be partitioned in on of the following ways:
\begin{enumerate}
\item $\{ (x,0),(y,1) \}$ and $\{ (y,0),(x,1) \}$;
\item $\{ (x,0),(y,0) \}$ and $\{ (x,1),(y,1) \}$.
\end{enumerate}
Of course each quadruple can be partitioned in a different way. A different partition implies different number of ND-pairs and
different multiplicities for these pairs. Bounds which are met by one partition of the quadruples might not be
met with the other partition.
\end{remark}

Starting with a construction of a nested SQS$(v)$ in which all the pairs are ND-pairs and applying Construction~\ref{cons:2vB}
yields a nested SQS$(2v)$ in which all the pairs are also ND-pairs. To have a nested SQS in which the ND-pairs
have larger multiplicity we can apply Construction~\ref{cons:2vA}.

We start with Construction~\ref{cons:2vA} from an SQS$(v)$, $(Q,\cB)$.
By Lemma~\ref{lem:multi_ConA} the generated SQS$(2v)$ has ND-pairs
with minimum multiplicity~$\frac{v}{2}$ if there are pairs of $Q$ which are not ND-pairs.
If all the pairs of $Q$ are ND-pairs in the SQS$(v)$ and the
smallest multiplicity is $\mu$, then by Lemma~\ref{lem:multi_ConA} in the SQS$(2v)$ obtained by the construction
the smallest multiplicity of an ND-pair is $\frac{v}{2} + \mu$.

We continue with Construction~\ref{cons:2vB} from an SQS$(v)$, $(Q,\cB)$. In the constructed SQS$(2v)$
all pairs are ND-pairs since in the SQS$(v)$ of the construction all the pairs are ND-pairs. If
the ND-pair with the smallest multiplicity has multiplicity $\mu$, then in the SQS$(2v)$ the ND-pair with the smallest multiplicity
has multiplicity~$2 \mu$. By Lemma~\ref{lem:upper_multi_min} the smallest multiplicity of a ND-pair
cannot be more than $\frac{v-1}{3}$. If such multiplicity is attained it implies
multiplicity $\frac{2(v-1)}{3}$ for the ND-pairs in the
nested SQS$(2v)$, while by Lemma~\ref{lem:multi_ConB} there will be some ND-pairs with larger multiplicity $v-1$.
By Lemma~\ref{lem:max_multiplicity}, this is the maximum multiplicity that can be obtained for any ND-pair.

\section{Blocks Partition with Uniform Multiplicity}
\label{sec:uniform}

In this section we will be interested in nested SQS$(v)$, $(Q,\cB)$, in which all the ND-pairs
have the same multiplicity. Such a nested SQS will be called a {\bf \emph{uniform nested SQS}}.
In particular we will be interested in such uniform nested SQSs where
all the $\binom{v}{2}$ pairs of $Q$ are ND-pairs
and also in such uniform nested SQSs when the number of ND-pairs attains the lower bound
of Corollary~\ref{cor:lower_nonzeroPN}, i.e., $\frac{v}{2}(\frac{v}{2}-1)$ (which is the minimum number of ND-pairs).
The first result indicates when uniform nested SQS can exist.

\begin{theorem}
\label{thm:all_uniform}
In a uniform nested SQS$(v)$,
if all the $\binom{v}{2}$ pairs in the nested SQS$(v)$ are ND-pairs, then $v \equiv 2 ~ (\mmod ~ 6)$ and the
multiplicity of each pair is $\frac{v-2}{6}$.
\end{theorem}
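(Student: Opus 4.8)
The plan is to derive both conclusions simultaneously from a single double-counting identity. Let $\cS=(Q,\cB)$ be a uniform nested SQS$(v)$ in which every one of the $\binom{v}{2}$ pairs of $Q$ is an ND-pair, and let $\mu$ be the common multiplicity of the ND-pairs; this number is well defined precisely because $\cS$ is uniform, and $\mu\ge 1$ because every pair is an ND-pair. I would then count the total number of pairs occurring in the block partitions of $\cS$ in two ways. On the one hand, since each of the $\binom{v}{2}$ pairs of $Q$ occurs exactly $\mu$ times, this total is $\mu\binom{v}{2}$. On the other hand, by Lemma~\ref{lem:total_pairs} it equals $\frac{v(v-1)(v-2)}{12}$.

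Equating the two expressions and dividing by $\binom{v}{2}=\frac{v(v-1)}{2}$ yields
$$\mu\,\binom{v}{2}=\frac{v(v-1)(v-2)}{12}\qquad\Longrightarrow\qquad \mu=\frac{v(v-1)(v-2)}{12}\cdot\frac{2}{v(v-1)}=\frac{v-2}{6}\,.$$
Since $\mu$ must be a positive integer, $6$ divides $v-2$, i.e. $v\equiv 2~(\mmod~6)$, and the common multiplicity is $\frac{v-2}{6}$, which is exactly the assertion of the theorem. Note that this rules out the residue $v\equiv 4~(\mmod~6)$ that Hanani's existence theorem otherwise allows, since then $v-2\equiv 2~(\mmod~6)$ is not divisible by $6$.

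There is really no obstacle to overcome here: the argument is pure enumeration, and the only care needed is in the setup --- one must observe that ``uniform'' together with ``all pairs are ND-pairs'' genuinely forces a single integer $\mu$ shared by all $\binom{v}{2}$ pairs, after which integrality of $\mu$ does everything. I would remark in passing that the identity $\frac{v(v-1)(v-2)}{12}\big/\binom{v}{2}=\frac{v-2}{6}$ used here is the same one underlying Lemma~\ref{lem:upper_multi_max}, so this theorem is essentially the equality case of that lower bound, specialised to uniform systems in which all pairs are ND-pairs.
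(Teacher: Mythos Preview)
Your proof is correct and follows essentially the same route as the paper: both count the total number of pairs via Lemma~\ref{lem:total_pairs}, divide by $\binom{v}{2}$ to obtain $\mu=\frac{v-2}{6}$, and then invoke integrality of $\mu$ to force $v\equiv 2~(\mmod~6)$. Your added remark tying this to the equality case of Lemma~\ref{lem:upper_multi_max} is a nice observation that the paper does not make explicit.
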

\begin{proof}
By Lemma~\ref{lem:total_pairs}, the total number of pairs in the blocks of a nested SQS$(v)$
is $\binom{v}{3} \big/ 2$. The number of pairs in a set of size $v$ is $\binom{v}{2}$ and hence if all of them
are ND-pairs in the nested SQS with the same multiplicity, then the multiplicity of each ND-pair is
$$
\frac{\binom{v}{3}}{2\binom{v}{2}}= \frac{v-2}{6} .
$$
Since SQS$(v)$ exists if and only if $v \equiv 2 ~\text{or} ~ 4 ~ (\mmod ~ 6)$, it follows that for $(v-2)/6$ to be an integer
we must have $v \equiv 2 ~ (\mmod ~ 6)$.
\end{proof}

A nested SQS$(v)$ which satisfies the properties of Theorem~\ref{thm:all_uniform} will be
called a {\bf \emph{complete uniform nested SQS}}. Such a nested SQS also attains the lower bound on
the minimum multiplicity of the ND-pair with the largest multiplicity (see Lemma~\ref{lem:upper_multi_max}).

\begin{example}
\label{ex:SQS8uniform}
By Theorem~\ref{thm:all_uniform} if $v \equiv 2 ~ (\mmod ~ 6)$, then a complete uniform nested SQS$(v)$, $(Q,\cB)$, in which each
of the $\binom{v}{2}$ pairs of $Q$ is a ND-pair, can exist.
Such a complete uniform nested SQS$(8)$ is defined as follows:

$$
[ \{\infty,0 \},\{2,6\}], ~ [ \{\infty,1 \},\{0,3\}], ~ [ \{\infty,2 \},\{1,4\}], ~[ \{\infty,3 \},\{2,5\}],
$$
$$
 ~[ \{\infty,4 \},\{3,6\}], ~[ \{\infty,5 \},\{0,4\}], ~ [ \{\infty,6 \},\{1,5\}], ~ [ \{0,1 \},\{4,6\}],
$$
$$
[ \{0,2 \},\{3,4\}], ~ [ \{0,5 \},\{1,2\}], ~ [ \{0,6 \},\{3,5\}], ~[ \{1,3 \},\{4,5\}],
$$
$$
[ \{1,6 \},\{2,3\}], ~[ \{2,4 \},\{5,6\}].
$$

\hfill\quad $\blacksquare $
\end{example}

In Theorem~\ref{thm:all_uniform} the requirement is that all $\binom{v}{2}$ pairs of $Q$ are ND-pairs and in
this case by the theorem a nested SQS with the same multiplicity for all the ND-pairs implies that $v \equiv 2 ~ (\mmod ~ 6)$.
If the requirement is relaxed in a way that not all the $\binom{v}{2}$ pairs of $Q$ are ND-pairs,
but the ND-pairs have the same multiplicity. In this case, there
are pairs of $Q$ which are not ND-pairs and we can have a similar result for $v \equiv 4 ~ (\mmod ~ 6)$.
In fact, for each $v \equiv 2 ~ \text{or} ~ 4 ~ (\mmod ~ 6)$, there can be a few sets of parameters (number of ND-pairs
and their multiplicity) which yield a uniform nested SQS.
Assume now, that there are $\frac{v}{2} \left( \frac{v}{2} -1  \right)$ ND-pairs. This is the case for example in
Construction~\ref{cons:2vA} (see lemma~\ref{lem:nonzeroP_ConA}).
\begin{theorem}
\label{thm:part_uniform}
In a uniform nested SQS$(v)$,
if there are exactly $\frac{v}{2} \left( \frac{v}{2} -1  \right)$ ND-pairs in the nested SQS$(v)$,
then $v \equiv 4 ~ (\mmod ~ 6)$ and the multiplicity of each ND-pair is $\frac{v-1}{3}$.
\end{theorem}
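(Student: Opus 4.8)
The plan is to mimic the counting argument used in the proof of Theorem~\ref{thm:all_uniform}, only now the number of ND-pairs is $\frac{v}{2}\left(\frac{v}{2}-1\right)$ rather than $\binom{v}{2}$. First I would invoke Lemma~\ref{lem:total_pairs}: the blocks of a nested SQS$(v)$ contribute $\frac{v(v-1)(v-2)}{12}$ pairs counted with multiplicity. If the nested SQS is uniform with common multiplicity $m$ and has exactly $\frac{v}{2}\left(\frac{v}{2}-1\right)=\frac{v(v-2)}{4}$ ND-pairs, then dividing the total pair count by the number of ND-pairs gives
$$
m=\frac{v(v-1)(v-2)}{12}\Big/\frac{v(v-2)}{4}=\frac{v-1}{3},
$$
which already determines the multiplicity claimed in the statement.

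Next I would derive the congruence. Since $m$ is a positive integer we need $3\mid v-1$, i.e.\ $v\equiv 1~(\mmod~3)$. On the other hand an SQS$(v)$ exists only for $v\equiv 2~\text{or}~4~(\mmod~6)$; of these, $v\equiv 2~(\mmod~6)$ forces $v\equiv 2~(\mmod~3)$, so only $v\equiv 4~(\mmod~6)$ is compatible with $v\equiv 1~(\mmod~3)$. Hence $v\equiv 4~(\mmod~6)$, which completes the proof.

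I do not expect a genuine obstacle here: the whole argument is a one-line divisibility count parallel to Theorem~\ref{thm:all_uniform}. The only points deserving care are the arithmetic simplification $\frac{v}{2}\left(\frac{v}{2}-1\right)=\frac{v(v-2)}{4}$ and verifying that the quotient collapses exactly to $\frac{v-1}{3}$. It is also worth remarking — though it is not needed for the necessity statement — that in this extremal case Lemma~\ref{lem:necc_min_NZ} pins down the underlying structure: the point set splits as $Q=Q_1\cup Q_2$ with $\abs{Q_1}=\abs{Q_2}=\frac{v}{2}$ and the ND-pairs are precisely the pairs inside $Q_1$ together with the pairs inside $Q_2$. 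This both re-confirms the count $2\binom{v/2}{2}=\frac{v}{2}\left(\frac{v}{2}-1\right)$ and is the natural starting point if one later wants an existence (as opposed to necessity) companion to this theorem.
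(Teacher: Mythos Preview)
Your proof is correct and follows exactly the paper's approach: invoke Lemma~\ref{lem:total_pairs} for the total pair count, divide by $\frac{v}{2}\left(\frac{v}{2}-1\right)$ to get $m=\frac{v-1}{3}$, and conclude $v\equiv 4~(\mmod~6)$ from the integrality of $m$ together with the SQS existence condition. The paper's version is terser (it omits the explicit congruence check you spell out), and your closing remark about Lemma~\ref{lem:necc_min_NZ} is a nice but optional addition.
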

\begin{proof}
By Lemma~\ref{lem:total_pairs}, the total number of pairs in the nested SQS$(v)$ is $v(v-1)(v-2)/12$ and when
divided by $\frac{v}{2} \left( \frac{v}{2} -1  \right)$
the result is $\frac{v-1}{3}$. This implies that $v \equiv 4 ~ (\mmod ~ 6)$.
\end{proof}

A nested SQS which satisfies the properties of Theorem~\ref{thm:part_uniform} attains the upper bound of
Lemma~\ref{lem:upper_multi_min}, i.e., it has the maximum multiplicity for the ND-pairs with the smallest multiplicity.

By Corollary~\ref{cor:lower_nonzeroPN} and Lemma~\ref{lem:upper_multi_min} a nested SQS$(v)$ which satisfies the properties
of Theorem~\ref{thm:part_uniform} has the minimum number of ND-pairs and the largest multiplicity of an ND-pair
with the smallest multiplicity. A nested SQS$(v)$ which satisfies the properties of Theorem~\ref{thm:part_uniform} will be
called a {\bf \emph{minimum uniform nested SQS}}.

\begin{theorem}
\label{thm:mi_uniform_construct}
If there exists a complete uniform nested SQS$(v)$, then there exists a minimum uniform nested SQS$(2v)$.
\end{theorem}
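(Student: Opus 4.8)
The plan is to apply Construction~\ref{cons:2vB} to the given complete uniform nested SQS$(v)$ and verify that the resulting nested SQS$(2v)$ is a \emph{minimum} uniform nested SQS, i.e., that it satisfies the parameters dictated by Theorem~\ref{thm:part_uniform}. First I would recall that a complete uniform nested SQS$(v)$ exists only for $v \equiv 2~(\mmod~6)$ and that in it all $\binom{v}{2}$ pairs are ND-pairs with common multiplicity $\frac{v-2}{6}$ (Theorem~\ref{thm:all_uniform}). Since $v \equiv 2~(\mmod~6)$ we have $2v \equiv 4~(\mmod~6)$, so the target order is consistent with Theorem~\ref{thm:part_uniform}.

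Next I would compute the relevant quantities for $\cD$, the nested SQS$(2v)$ produced by Construction~\ref{cons:2vB}. By Lemma~\ref{lem:multi_ConB}, every ND-pair of the form $\{(a,i),(b,j)\}$ with $a \neq b$ coming from Type~I blocks has multiplicity $2\mu$, where $\mu$ is the multiplicity of $\{a,b\}$ in $\cS$; since $\cS$ is complete uniform, $\mu = \frac{v-2}{6}$ for every such pair, giving uniform multiplicity $\frac{v-2}{3} = \frac{(2v-2)-2}{6}$. Wait — I must instead compare against $\frac{(2v)-1}{3}$: indeed $\frac{v-2}{3}$ does \emph{not} equal $\frac{2v-1}{3}$, so the Type~I pairs alone do not have the right multiplicity, and neither do the Type~II pairs $\{(a,0),(a,1)\}$, which by Lemma~\ref{lem:multi_ConB} have multiplicity $v-1 \neq \frac{2v-1}{3}$ in general. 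Hence a \emph{plain} application of Construction~\ref{cons:2vB} does not yield a uniform system, and the construction in the proof must be different.

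So the real plan is: on the point set $Q \times \{0,1\}$ of size $2v$, realize the $\frac{2v}{2}\bigl(\frac{2v}{2}-1\bigr) = v(v-1)$ ND-pairs as exactly the pairs within $Q \times \{0\}$ together with the pairs within $Q \times \{1\}$ — this matches the structure forced by Lemma~\ref{lem:necc_min_NZ}, with $Q_1 = Q\times\{0\}$ and $Q_2 = Q\times\{1\}$. I would take the Type~I blocks and Type~II blocks as in a doubling construction of SQS$(2v)$ (for instance the one underlying Construction~\ref{cons:2vA}, using a one-factorization of $K_v$), but choose the \emph{partition of each block into pairs} so that: (a) every partitioned pair lies entirely inside $Q\times\{0\}$ or entirely inside $Q\times\{1\}$, and (b) the complete-uniformity of $\cS$ is used to balance the counts. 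Concretely, a Type~I block $\{(x,0),(y,0),(z,0),(w,0)\}$ is partitioned using the ND-partition of $\{x,y,z,w\}$ in $\cS$ (and similarly on level $1$); a Type~II block $\{(x,0),(y,0),(z,1),(w,1)\}$ with $\{x,y\},\{z,w\} \in F_i$ is partitioned into $\{(x,0),(y,0)\}$ and $\{(z,1),(w,1)\}$. Then I count: each pair $\{(a,0),(b,0)\}$ receives $\mu = \frac{v-2}{6}$ contributions from Type~I (by completeness of $\cS$) plus, from Type~II, one contribution for each one-factor $F_i$ containing $\{a,b\}$ — exactly one such $i$ — wait, that gives $\frac{v-2}{6}+1$, still not $\frac{2v-1}{3}$. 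The count needs each level-$0$ pair to appear $\frac{2v-1}{3} = \frac{v-2}{6}\cdot\frac{2(2v-1)}{v-2}$ times; matching this forces using $\cS$ more cleverly — likely taking \emph{two disjoint copies} of an SQS$(v)$-doubling, or iterating, so that each within-level pair accumulates the correct total. I would verify the arithmetic identity $2 \cdot \frac{v(v-1)(v-2)}{24} \cdot \text{(pairs per block)} \big/ \bigl(2\binom{v}{2}\bigr)$ against $\frac{2v-1}{3}$ to pin down exactly how many block-families of each type are needed.

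The main obstacle, therefore, is not the existence of the SQS$(2v)$ structure (standard doubling) but arranging the \emph{pair-partitions} and possibly combining several doubling families so that (i) no ND-pair ever crosses between $Q\times\{0\}$ and $Q\times\{1\}$, forcing the ND-pair set to have size exactly $v(v-1)$, and (ii) every within-level pair attains the common multiplicity $\frac{2v-1}{3}$ forced by Theorem~\ref{thm:part_uniform}. I expect the verification to reduce, via Lemma~\ref{lem:total_pairs} and a double-counting argument, to checking a single divisibility/counting identity that holds precisely because $\cS$ is complete uniform (so every pair of $Q$ contributes the \emph{same} number $\frac{v-2}{6}$ of Type~I partitioned pairs on each level), with the one-factorization supplying the uniform Type~II contribution. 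Once both (i) and (ii) are confirmed, Theorem~\ref{thm:part_uniform} identifies the result as a minimum uniform nested SQS$(2v)$, completing the proof.
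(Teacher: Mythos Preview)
Your instinct to abandon Construction~\ref{cons:2vB} and switch to Construction~\ref{cons:2vA} is exactly right, and this is precisely what the paper does. The genuine gap is a miscounting of the Type~II contribution, which derails the rest of your proposal. You write that the pair $\{(a,0),(b,0)\}$ receives from Type~II ``one contribution for each one-factor $F_i$ containing $\{a,b\}$ --- exactly one such $i$'', concluding a total of $\frac{v-2}{6}+1$. But look again at the definition of Type~II in Construction~\ref{cons:2vA}: for the unique $i$ with $\{a,b\}\in F_i$, you get a block $\{(a,0),(b,0),(z,1),(w,1)\}$ for \emph{every} edge $\{z,w\}\in F_i$, and there are $\frac{v}{2}$ such edges. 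So the Type~II contribution to the multiplicity of $\{(a,0),(b,0)\}$ is $\frac{v}{2}$, not $1$ (this is exactly the content of Lemma~\ref{lem:multi_ConA}).

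With the correct count the arithmetic closes immediately:
\[
\frac{v-2}{6}+\frac{v}{2}=\frac{v-2+3v}{6}=\frac{4v-2}{6}=\frac{2v-1}{3},
\]
which is the multiplicity required by Theorem~\ref{thm:part_uniform} for a minimum uniform nested SQS$(2v)$. Combined with Lemma~\ref{lem:nonzeroP_ConA} (the ND-pairs are exactly the $v(v-1)=\frac{2v}{2}\bigl(\frac{2v}{2}-1\bigr)$ within-level pairs), you are done. There is no need for ``two disjoint copies'' or any additional block-families; a single application of Construction~\ref{cons:2vA} to the complete uniform nested SQS$(v)$ suffices.
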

\begin{proof}
Apply Construction~\ref{cons:2vA} on a complete uniform nested SQS$(v)$, $(Q,\cB)$. By Lemma~\ref{lem:nonzeroP_ConA},
the number of ND-pairs of the SQS$(2v)$ obtained by the construction is $v(v-1)$.
By Lemma~\ref{lem:multi_ConA} the multiplicity of each such pair is $\frac{v}{2} + \mu$, where $\mu = \frac{v-2}{6}$
since the SQS$(v)$ is a complete uniform nested SQS. Hence, the multiplicity of each ND-pair for the nested SQS$(2v)$
is $\frac{v}{2} + \frac{v-2}{6} = \frac{2v-1}{3}$ as required.
\end{proof}

The necessary conditions of Theorem~\ref{thm:part_uniform} are not sufficient for the existence of a minimum uniform nested SQS.
This is for example a consequence of Corollary~\ref{cor:necc_min_NZ_odd}.

\begin{corollary}
\label{cor:No_min_2_10_mod12}
If $v \equiv 10 ~(\mmod ~12)$, then there is no minimum uniform nested SQS$(v)$, i.e., if $\cS$ is
minimum uniform nested SQS$(v)$, then $v \equiv 4 ~(\mmod ~12)$.
\end{corollary}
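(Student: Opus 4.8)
The plan is to derive a contradiction from combining Theorem~\ref{thm:part_uniform} with Corollary~\ref{cor:necc_min_NZ_odd}. A minimum uniform nested SQS$(v)$ is, by definition, a nested SQS$(v)$ satisfying the properties of Theorem~\ref{thm:part_uniform}; in particular it has exactly $\frac{v}{2}\left(\frac{v}{2}-1\right)$ ND-pairs, which is the minimum number of ND-pairs (by Corollary~\ref{cor:lower_nonzeroPN}), and every ND-pair has the same multiplicity $\frac{v-1}{3}$. Theorem~\ref{thm:part_uniform} already forces $v \equiv 4 \ (\mmod\ 6)$, so $v$ is even and $v \equiv 4$ or $10 \ (\mmod\ 12)$; it remains to rule out the case $v \equiv 10 \ (\mmod\ 12)$.

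First I would observe that a minimum uniform nested SQS$(v)$ has exactly $\frac{v}{2}\left(\frac{v}{2}-1\right) = \frac{v^2}{4} - \frac{v}{2}$ ND-pairs. On the other hand, by Corollary~\ref{cor:necc_min_NZ_odd}, if $v \equiv 2$ or $10 \ (\mmod\ 12)$ then any nested SQS$(v)$ on $\Z_v$ has at least $\frac{v^2}{4}$ ND-pairs. Since $\frac{v^2}{4} - \frac{v}{2} < \frac{v^2}{4}$ for $v > 0$, the two statements are incompatible: a minimum uniform nested SQS$(v)$ with $v \equiv 10 \ (\mmod\ 12)$ would need strictly fewer ND-pairs than Corollary~\ref{cor:necc_min_NZ_odd} permits. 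Hence no such system exists, and combined with the congruence $v \equiv 4 \ (\mmod\ 6)$ from Theorem~\ref{thm:part_uniform} we conclude $v \equiv 4 \ (\mmod\ 12)$.

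The one technical point to verify carefully is the hypothesis of Corollary~\ref{cor:necc_min_NZ_odd}: that result is stated for nested SQS$(v)$ on the point set $\Z_v$ (it uses the additive structure of $\Z_v$ in Lemma~\ref{lem:necc_min_NZ_odd}). So the corollary as I would state it should either carry the same "on $\Z_v$" qualifier, or one should note that the counting argument in Lemma~\ref{lem:necc_min_NZ_odd} in fact only uses that $|Q| = v$ with $v \equiv 2$ or $10 \ (\mmod\ 12)$, writing $\frac{v}{2} = 2k+1$, and makes no genuine use of the group structure beyond indexing the points — the pigeonhole inequality $2k \geq \binom{2k+1}{2}/k = 2k+1$ is purely arithmetic. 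I expect this bookkeeping about the point set to be the only mild obstacle; the core deduction is the immediate numerical clash $\frac{v^2}{4} - \frac{v}{2} < \frac{v^2}{4}$ against the lower bound, after which the statement follows in one line.
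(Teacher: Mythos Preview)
Your proposal is correct and follows essentially the same route as the paper: the corollary is stated immediately after the sentence ``This is for example a consequence of Corollary~\ref{cor:necc_min_NZ_odd},'' so the intended argument is precisely the numerical clash you describe between the $\frac{v}{2}\bigl(\frac{v}{2}-1\bigr)$ ND-pairs of a minimum uniform nested SQS and the lower bound $\frac{v^2}{4}$ from Corollary~\ref{cor:necc_min_NZ_odd}, together with the congruence $v\equiv 4\ (\mmod\ 6)$ from Theorem~\ref{thm:part_uniform}. Your remark that the ``on $\Z_v$'' hypothesis in Lemma~\ref{lem:necc_min_NZ_odd} is inessential (the argument is pure counting on a $v$-element point set) is a correct and useful clarification that the paper glosses over.
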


We continue with some specific examples of complete uniform nested SQS$(v)$s for small values of~$v$.

\begin{example}
\label{ex:ro20}
{\bf \underline{A complete uniform nested rotational SQS$(20)$}:}

Each 19 blocks have one base block from which a complete uniform nested SQS$(20)$ is constructed.
The following 15 base blocks were found by computer search done by Daniella Bar-Lev:

$$
[ \{ \infty , 1 \},\{ 0,8 \} ], ~[ \{ \infty , 2 \},\{ 0,5 \} ], ~[ \{ \infty , 13 \} , \{ 0,9 \} ],~[  \{ 0, 1 \},\{ 2,4 \} ],~[\{0,1\},\{6,9\}],
$$
$$
[\{0,1\},\{10,17\}],~[\{0,2\},\{6,14\}],~[\{0,2\},\{9,15\}],~[\{0,3\},\{4,16\}],~[\{0,3\},\{5,10\}],
$$
$$
[\{0,4\},\{5,9\}],~[\{0,4\},\{7,15\}],~[\{0,5\},\{6,16\}],~[\{0,6\},\{11,18\}],~[\{0,6\},\{8,17\}].
$$
The multiplicity of each ND-pair in this example of a complete uniform nested SQS$(20)$ is three.

\hfill\quad $\blacksquare $
\end{example}

\begin{example}
\label{ex:ro26}
{\bf \underline{A complete uniform nested rotational SQS$(26)$}:}

Each 25 blocks have one base block from which a complete uniform nested SQS$(26)$ is constructed.
The 26 base blocks are taken from~\cite{JiZh02} and the partition of each block into pairs is as follows:

$$
[ \{ \infty , 3 \},\{ 0,1 \} ], ~[ \{ \infty , 13 \},\{ 0,5 \} ], ~[ \{ \infty , 14 \} , \{ 0,7 \} ],~[  \{ \infty , 15 \},\{ 0,6 \} ],~[\{0,1\},\{12,22\}],
$$
$$
[\{0,1\},\{13,21\}],~[\{0,1\},\{14,23\}],~[\{0,2\},\{1,5\}],~[\{0,2\},\{6,9\}],~[\{0,2\},\{7,17\}],
$$
$$
[\{0,2\},\{15,20\}],~[\{0,3\},\{5,11\}],~[\{0,3\},\{8,17\}],~[\{0,3\},\{13,20\}],~[\{0,4\},\{2,12\}],
$$
$$
[\{0,4\},\{6,18\}],~[\{0,4\},\{10,21\}],~[\{0,5\},\{6,24\}],~[\{0,5\},\{9,21\}],~[\{0,6\},\{3,10\}],
$$
$$
[\{0,6\},\{11,22\}],~[\{0,8\},\{ 1,9\}],~[\{0,8\},\{6,19\}],~[\{0,9\},\{7,18\}],~[\{0,9\},\{10,24\}],
$$
$$
[\{0,10\},\{7,19\}].
$$
The multiplicity of each ND-pair in this example of a complete uniform nested SQS$(26)$ is four.

\hfill\quad $\blacksquare $
\end{example}

\begin{example}
\label{ex:ro38}
{\bf \underline{A complete uniform nested rotational SQS$(38)$}:}

Each 37 blocks have one base block from which a complete uniform nested SQS$(38)$ is constructed.
The 57 base blocks are taken from~\cite{JiZh02} and the partition of each block into pairs is as follows:

$$
[\{\infty,22 \}, \{ 0,2 \}],~[ \{\infty,33 \},\{ 0,3 \} ],~[\{ \infty,14 \},\{ 0,8 \}],~[ \{\infty,10 \},\{0,11\} ], ~[ \{\infty,18\},\{ 0,13 \}],
$$
$$
[ \{\infty,28 \},\{0,16\} ],~ [ \{ 0,1 \} , \{ 2,6 \}],~ [ \{ 0,1 \} , \{ 7,19 \}],~[ \{ 0,1 \} , \{ 9,35 \}],~[ \{ 0,1 \} , \{ 10,24 \}],
$$
$$
[ \{ 0,1 \} , \{ 13,30 \}],~[ \{ 0,1 \} , \{ 20,34 \}],~[ \{ 0,2 \} , \{ 4,16 \}],~[ \{ 0,2 \} , \{ 5,8 \}],~[ \{ 0,2 \},\{ 13,36 \} ],
$$
$$
[ \{ 0,2 \} , \{ 19,24 \}],~ [ \{ 0,2 \} , \{ 25,30 \}], ~ [ \{ 0,3 \} , \{ 7,28 \}], ~[ \{ 0,3 \} , \{ 9,19 \}],~[ \{ 0,3 \} , \{ 12,20 \}],
$$
$$
[ \{ 0,3 \} , \{17,27 \}],~ [ \{ 0,4 \} , \{ 1,11 \}],~ [ \{ 0,4 \} , \{ 5,21 \}],~[ \{ 0,4 \} , \{ 8,22 \}],~[ \{ 0,4 \} , \{ 15,27 \}],
$$
$$
[ \{ 0,4 \} , \{ 17,32 \}],~[ \{ 0,5 \} , \{ 3,26 \}],~[ \{0,5 \} , \{ 19,35 \}],~ [ \{ 0,5 \} , \{23,29 \}],~[ \{ 0,5 \} ,\{27,34 \}],
$$
$$
[ \{ 0,6 \} , \{ 4,30 \}],~[ \{ 0,6 \} , \{ 5,31\}] ,~[ \{ 0,6 \} , \{ 7,27 \}],~[ \{ 0,6 \} , \{ 10,32 \}],~ [ \{ 0,6 \} , \{ 17,26 \}],
$$
$$
[ \{ 0,7 \} , \{ 5,25 \}],~[ \{ 0,7 \} , \{ 12,23 \}],~[ \{ 0,7 \} , \{ 14,31 \}],~[ \{ 0,7 \} , \{ 17,36 \}],~[ \{ 0,7 \} , \{ 26,35 \}],
$$
$$
[ \{ 0,8 \} , \{ 6,23 \}],~~[ \{ 8,0 \}, \{ 9,33 \}],~[ \{ 0,8 \} , \{ 10,35 \}],~[ \{ 0,8 \} , \{ 11,24 \}], ~[ \{ 0,9 \} , \{ 4,14 \}],
$$
$$
[ \{ 0,9 \} , \{ 6,24 \}],~[ \{ 0,9 \} , \{ 10,21 \}],~[ \{ 0,9 \} , \{ 15,30 \}],~[ \{ 0,10 \} , \{ 4,19 \}], ~[ \{ 0,10 \} , \{ 22,34 \}],
$$
$$
[ \{ 0,12 \} , \{ 8,32 \}], ~[ \{ 0,13 \} , \{ 9,29 \}],~[ \{ 0,13 \} , \{ 16,35 \}],~[ \{ 0,14 \} , \{ 15,36 \}],~[ \{ 0,15 \} , \{ 3,21 \}],
$$
$$
[ \{ 0,15 \} , \{ 8,26 \}],~[ \{ 0,16 \}, \{ 8,27 \}].
$$
The multiplicity of each ND-pair in this example of a complete uniform nested SQS$(38)$ is six.

\hfill\quad $\blacksquare $
\end{example}

\begin{example}
\label{ex:ro62}
{\bf \underline{A complete uniform nested rotational SQS$(62)$}:}

This rotational SQS$(62)$ has a multiplier group $\{1,9,20,34,58\}$ of $\Z_{61}$.
Each 305 blocks have one base block from which a complete uniform nested SQS$(62)$ is constructed.
The 31 base blocks are taken from~\cite{JiZh02} and the partition of each block into pairs is as follows:

$$
[\{\infty,5 \}, \{ 0,1 \}],~[ \{\infty,10 \},\{ 0,2 \} ],~[ \{0,1 \},\{20,30\} ],~[\{0,1 \},\{ 21,42 \}], ~[ \{0,1\},\{ 22,23 \}],
$$
$$
[ \{0,1 \},\{24,25\} ],~ [ \{ 0,1 \} , \{ 26,31 \}],~ [ \{ 0,1 \} , \{ 29,41 \}],~[ \{ 0,1 \} , \{ 33,57 \}],~[ \{ 0,2 \} , \{ 14,44 \}],
$$
$$
[ \{ 0,2 \} , \{ 18,56 \}],~[ \{ 0,2 \} , \{ 20,22 \}],~[ \{ 0,2 \} , \{ 30,53 \}],~[ \{ 0,2 \},\{ 33,49 \} ],~[ \{ 0,2 \} , \{ 40,42 \}],
$$
$$
[ \{ 0,4 \} , \{ 7,58 \}], ~[ \{ 0,4 \} , \{ 8,53 \}],~[ \{ 0,4 \} , \{ 9,13 \}],~ [ \{ 0,4 \} , \{ 12,28 \}], ~[ \{ 0,4 \} , \{ 17,43 \}],
$$
$$
[ \{ 0,4 \} , \{22,46 \}],~[ \{ 0,4 \} , \{ 33,55 \}],~  [ \{ 0,4 \} , \{ 36,44 \}],~ [ \{ 0,5 \} , \{ 14,52 \}],~[ \{0,5 \} , \{ 18,28 \}],
$$
$$
[ \{ 0,5 \} , \{ 34,39 \}],~[ \{ 0,5 \} , \{ 48,56 \}],~ [ \{ 0,8 \} , \{1,9 \}],~[ \{ 0,8 \} , \{ 21,51 \}],~[ \{ 0,10 \} ,\{1,11 \}],
$$
$$
[ \{ 0,10 \} , \{ 12,56 \}].
$$
The multiplicity of each ND-pair in this example of a complete uniform nested SQS$(62)$ is ten.

\hfill\quad $\blacksquare $
\end{example}

A construction for an infinite family of uniform nested SQSs is one of the basic goals for this research.
The most attractive target is to construct a uniform nested SQS$(2^n)$. A~complete uniform nested
SQS$(8)$ was given in Example~\ref{ex:SQS8uniform}. This implies by Theorem~\ref{thm:mi_uniform_construct}
a minimum uniform nested SQS$(16)$. The next one is SQS$(32)$. A recursive construction is based on the blocks
of a minimum uniform nested SQS$(16)$ and some more structures of block design. The construction is quite complicated and since we have not
managed to generalize it, we omit its description.
We will describe now another construction that yields a complete uniform nested SQS$(32)$.
The description will be general for SQS$(2^n)$, but its implementation will be only for SQS$(32)$.

\begin{construction}
\label{cons:Bool}
Let $\cS=(\Z_{2^n-1} \cup \{ \infty \},\cB)$ be a Boolean SQS$(2^n)$, where $n$ is odd integer and the elements of $\Z_2^n \setminus \{0\}$ are
identified by the powers of a primitive element $\alpha$ in GF$(2^n)$.
The block $\{ \infty, i,j,k \}$ is in $\cS$
if and only if $\alpha^i + \alpha^j + \alpha^k = 0$ and the block $\{ i,j,k,m \}$ is in $\cS$
if and only if $\alpha^i + \alpha^j + \alpha^k + \alpha^m = 0$.
Two blocks whose elements differ only by a multiplication of some power of two will be considered to be
in the same {\bf \emph{block class}}.
For example, if the block $\{ \infty ,x,y,z \}$ is in~$\cB$, then the block $\{ \infty, 2x,2y,2z \}$ is also in $\cB$.
Also, if the block $\{ \infty ,x,y,z \}$ is in~$\cB$, then
the block $\{ \infty, 2^n - 1 -x,2^n -1 -y,2^n -1 -z \}$ is also in $\cB$ and in the same block class.
Similarly, if the block $\{ x,y,z,w \}$ is in~$\cB$, then the block $\{ 2x,2y,2z,2w \}$ is also in $\cB$
and the block $\{ 2^n-1-x,2^n-1-y,2^n-1-z,2^n-1-w \}$ is also in $\cB$.

For each block class one base block is chosen. The partition of this base block into pairs implies the same
partition into pairs for each block in its block class.
For example, if $\{ x,y,z,w \}$ is partitioned into $\{ x,y \}$ and $\{z,w\}$,
then $\{ 2x,2y,2z,2w \}$ is partitioned into $\{ 2x,2y \}$ and $\{2z,2w\}$,
and $\{ 2^n-1-x,2^n-1-y,2^n-1-z,2^n-1-w \}$ is partitioned into $\{ 2^n-1-x,2^n-1-y \}$ and $\{2^n-1-z,2^n-1-w\}$.
\end{construction}

\begin{example}
\label{ex:Bool32}
We apply all the given rules of Construction~\ref{cons:Bool} to find the partition
of each block for the Boolean SQS$(32)$ represented as a rotational SQS.
Let $\alpha$ be a root of the primitive polynomial ${x^5 + x^2 +1}$, i.e., $\alpha$ is a primitive element
in GF$(32)$. The points of the system are $Z_{31} \cup \{\infty \}$. The block $\{ \infty, i,j,k \}$ is in the system
if and only if $\alpha^i + \alpha^j + \alpha^k = 0$ and the block $\{ i,j,k,m \}$ is in the system
if and only if $\alpha^i + \alpha^j + \alpha^k + \alpha^m = 0$.
The SQS$(32)$ has exactly eight block classes and here is such a partition for each block to obtain
a complete uniform nested SQS$(32)$ from the eight base blocks of the block classes for this SQS$(32)$:
$$
B_1=[ \{ 0,1 \} , \{ 2,11 \}], ~B_2=[\{0,1\},\{3,27\}],~B_3=[\{0,1\},\{4,17\}],~B_4=[\{0,1\},\{5,19\}],
$$
$$
B_5=[\{0,26\},\{1,7\}],~B_6=[\{0,24\},\{1,14\}],~B_7=[\{0,5\},\{10,22\}],~B_8=[\{0,2\},\{\infty,5\}].
$$

\hfill\quad $\blacksquare $
\end{example}

\begin{lemma}
\label{lem:SQS32}
Example~\ref{ex:Bool32} yields a complete uniform nested SQS$(32)$.
\end{lemma}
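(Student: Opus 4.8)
The plan is to verify the claim directly, using the symmetry built into Construction~\ref{cons:Bool} to cut the work down to the eight base blocks. Write $\cS=(\Z_{31}\cup\{\infty\},\cB)$ for the Boolean SQS$(32)$ of Example~\ref{ex:Bool32}, and let $\Gamma$ be the group of permutations of the $32$ points generated by the rotation $x\mapsto x+1$, the doubling $x\mapsto 2x$, and the complement map of Construction~\ref{cons:Bool}; by the way the blocks and their partitions are produced, both $\cB$ and the partition of its blocks into pairs are $\Gamma$-invariant. The first step is the routine check that each of the eight quadruples underlying $B_1,\dots,B_8$ really is a block of $\cS$ (the relevant sum of powers of $\alpha$ vanishes, using $\alpha^{5}=\alpha^{2}+1$) and that $B_1,\dots,B_8$ lie in eight pairwise distinct block classes; since $\cS$ has exactly eight block classes, $\{B_1,\dots,B_8\}$ is then a complete and irredundant set of class representatives, so every block of $\cS$ is obtained from exactly one $B_\ell$ under Construction~\ref{cons:Bool}.

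The second step is to check that the partition rules are consistent, i.e.\ that no block is assigned two different partitions. Equivalently, for each $\ell$ one computes the stabilizer of the unordered block $B_\ell$ inside $\Gamma$ and verifies that its nontrivial element maps the chosen pair-partition $\{P_1^{(\ell)},P_2^{(\ell)}\}$ to itself --- concretely, it must not act on $B_\ell$ as a ``cross transposition'' interchanging a point of $P_1^{(\ell)}$ with a point of $P_2^{(\ell)}$. This is where the particular partitions chosen in Example~\ref{ex:Bool32} enter, and it reduces to a finite case analysis over the eight classes.

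Finally, one counts multiplicities. Since the nested structure is $\Gamma$-invariant, the multiplicity of a pair depends only on its $\Gamma$-orbit, so it suffices to evaluate it on one representative of each orbit; for each representative one counts, class by class, how often it appears among the partition pairs generated from $B_1,\dots,B_8$. The target value is $5=\tfrac{v-2}{6}$: this is forced on average by Lemma~\ref{lem:total_pairs}, which gives $\tfrac{32\cdot31\cdot30}{12}=2480=5\binom{32}{2}$ pairs counted with multiplicity, and for the pairs $\{x,\infty\}$ it is immediate, since the $155$ blocks through $\infty$ each contribute exactly one such pair and the rotation is transitive on the $31$ of them, giving multiplicity $155/31=5$. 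Once every pair is shown to have multiplicity exactly $5$, all $\binom{32}{2}$ pairs are ND-pairs with the same multiplicity $5=\tfrac{v-2}{6}$, so $\cS$ satisfies the defining properties of a complete uniform nested SQS (the value $5$ being the one predicted by Theorem~\ref{thm:all_uniform} for $v=32$); this proves the lemma. The step I expect to be the main obstacle is the second one: correctly identifying the stabilizer of each base block in $\Gamma$ and confirming it respects the chosen partition. Once those stabilizers are in hand, both the well-definedness and the multiplicity count are short, essentially mechanical verifications.
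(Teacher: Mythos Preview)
Your overall strategy --- exploit the symmetry to reduce to the eight base blocks and then count multiplicities on one pair per orbit --- is exactly the paper's. But there is a genuine gap in the setup: the complement map $x\mapsto 2^n-1-x\equiv -x\pmod{31}$ (which, despite what Construction~\ref{cons:Bool} asserts, corresponds to field inversion $\alpha^x\mapsto\alpha^{-x}$) is \emph{not} an automorphism of the Boolean SQS. For instance $\{\infty,0,2,5\}\in\cB$ since $1+\alpha^2+\alpha^5=0$, but its ``complement'' $\{\infty,0,29,26\}$ is not a block: $1+\alpha^{29}+\alpha^{26}=(\alpha^4+\alpha^2+1)/(\alpha^2(\alpha^2+1))\neq 0$. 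So $\cB$ is not $\Gamma$-invariant for your $\Gamma$, and the argument as written breaks at the first sentence. The fix is simply to drop the complement and take $\Gamma=\langle x\mapsto x+1,\ x\mapsto 2x\rangle$, of order $31\cdot 5=155$.

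With this corrected $\Gamma$, your ``main obstacle'' evaporates: since $|\cB|=1240=8\cdot 155$, all eight block-orbits are regular, every stabilizer is trivial, and the partition is automatically well-defined --- there is nothing to check in Step~2. The real work is Step~3, and here the paper carries it out explicitly rather than abstractly: the $\Gamma$-orbits on unordered pairs are the single orbit $\{\infty,x\}$ together with the three ``difference classes'' $C_1\cup C_{15}$, $C_3\cup C_7$, $C_5\cup C_{11}$ (a finite pair $\{a,b\}$ being classified by the cyclotomic coset of $\pm(a-b)$), and one checks by hand that each of these four classes receives exactly five of the sixteen ND-pairs coming from $B_1,\dots,B_8$. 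That tally is the substance of the proof; your framework reaches it once $\Gamma$ is corrected.
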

\begin{proof}
Clearly by the definition of the blocks in the system a Boolean SQS$(32)$ was defined.
Consider the six cyclotomic cosets of size 5 modulo 31, i.e.,
The set of integers modulo $31$, $\{ x, 2x, \ldots, 16 x \}$, $x \notin \{0,31 \}$, is a {\bf \emph{cyclotomic coset}} modulo $31$.
$$
C_1 = \{ 1,2,4,8,16 \}, ~~~ C_{15} = \{ 30,29,27,23,15 \},
$$
$$
C_3 = \{ 3,6,12,24,17 \}, ~~~ C_7 = \{ 28,25,19,7,14\},
$$
$$
C_5 = \{ 5, 10,20,9,18 \}, ~~~ C_{11} = \{ 26,21,11,22,13\}.
$$

The differences in the pairs $\{0,1\}$ and $\{0,2\}$ are elements in $C_1 \cup C_{15}$ and they are contained 5 times,
in $B_1$, $B_2$, $B_3$, $B_4$, and $B_8$. No other difference from $C_1 \cup C_{15}$ is contained in an ND-pair
from the partition of the blocks into pairs. When we generate the block class for each block, say $B_1$, each difference from
$C_1 \cup C_{15}$ will be the difference in exactly one of the blocks from the class.
Since, there are exactly 5 base blocks with differences from $C_1 \cup C_{15}$, it follows that each difference in pairs
from $C_1 \cup C_{15}$ is contained in exactly five base blocks for the rotational SQS$(32)$ which is the required
multiplicity of an ND-pair in a complete uniform nested SQS$(32)$.

The difference 24 is obtained in the ND-pair $\{ 3, 27\}$ which is contained in $B_2$ and is also obtained in the ND-pair
$\{ 0,24 \}$ which is contained in $B_6$. The difference 14 is obtained in the ND-pair $\{ 5,19\}$ which is contained
in $B_4$. The difference 6 is obtained in the ND-pair $\{ 1,7\}$ which is contained in $B_5$.
The difference 12 is obtained in the ND-pair $\{ 10,22\}$ which is contained in $B_7$. These four differences 24, 14, 6, and 12,
are elements in $C_3 \cup C_7$.
Since, there are exactly 5 base blocks with differences from $C_3 \cup C_7$, it follows that each difference in pairs
from $C_3 \cup C_7$ is contained in exactly five base blocks for the rotational SQS$(32)$ which is the required
multiplicity of an ND-pair in a complete uniform nested SQS$(32)$.

The difference 9 is obtained in the ND-pair $\{2,11\}$ which is contained in $B_1$.
The difference 13 is obtained in the ND-pair $\{4,17\}$ which is contained in $B_3$
and is also obtained  in the pair $\{1,14\}$ which is contained in $B_6$.
The difference 26 is obtained in the ND-pair $\{0,26\}$ which is contained in $B_5$.
The difference 5 is obtained in the ND-pair $\{0,5\}$ which is contained in $B_7$.
Since, there are exactly 5 base blocks with differences from $C_5 \cup C_{11}$, it follows that each difference in pairs
from $C_5 \cup C_{11}$ is contained in exactly five base blocks for the rotational SQS$(32)$ which is the required
multiplicity of a ND-pair in a complete uniform nested SQS$(32)$.

The pair $\{ \infty,5\}$ contained in $B_8$ and its multiples by 2, $\{ \infty,10\}$, $\{ \infty,20\}$, $\{ \infty,9\}$, $\{ \infty,18\}$,
together with their 31 cyclic shifts implies that each pair $\{ \infty ,x \}$, $0 \leq x \leq 30$, is a ND-pair with muliplicity 5 as required.
\end{proof}

Example~\ref{ex:Bool32} for a complete uniform nested Boolean SQS$(32)$ was relatively easy to obtain since only
eight base blocks are involved in the construction. Implementation of Construction~\ref{cons:Bool}
for an SQS$(128)$ will be more difficult as 96 base blocks are involved. Larger orders will make it even
extremely harder to obtain a complete uniform nested SQS.

In a minimum uniform nested SQS$(v)$ the number of ND-pairs is $\frac{v}{2} \left( \frac{v}{2} -1  \right)$
and each such pair has the same multiplicity.
But, can we have a uniform nested SQS$(v)$, where the number of ND-pairs is between
$\frac{v}{2} \left( \frac{v}{2} -1  \right)$ and $\binom{v}{2}$. The answer is definitely positive.
There is a unique SQS$(10)$~\cite{Bar08}. This system has 30 blocks (between $\frac{v}{2} \left( \frac{v}{2} -1  \right)=20$
and $\binom{v}{2}=45$). The total number of pairs in a nested SQS$(10)$ is 60.
A uniform nested SQS$(10)$ with 20 ND-pairs, where each one has multiplicity 3 does not exist.
There exists a uniform nested SQS$(10)$ with 30 ND-pairs as presented in the following example.

\begin{example}
\label{ex:SQS10uniform}
$~$
\noindent
Let $\Z_5 \times \Z_2$ be the set of points for an SQS$(10)$ and consider the following set of its blocks
and the partition of each block into pairs:
\noindent
$$
[ \{(0,0),(2,1) \},\{(0,1),(1,1)\}], [ \{(2,0),(3,1) \},\{(1,1),(2,1)\}], [ \{(4,0),(4,1) \},\{(2,1),(3,1)\}],
$$
$$
[ \{(1,0),(0,1) \},\{(3,1),(4,1)\}], ~[ \{(3,0),(1,1) \},\{(4,1),(0,1)\}],
$$
$$
[ \{(0,0),(4,1) \},\{(1,1),(3,1)\}], [ \{(2,0),(0,1) \},\{(2,1),(4,1)\}], [ \{(4,0),(1,1) \},\{(0,1),(3,1)\}],
$$
$$
[ \{(1,0),(2,1) \},\{(1,1),(4,1)\}], ~[ \{(3,0),(3,1) \},\{(0,1),(2,1)\}],
$$
$$
[ \{(1,0),(2,0) \},\{(0,1),(1,1)\}], [ \{(3,0),(4,0) \},\{(1,1),(2,1)\}], [ \{(0,0),(1,0) \},\{(2,1),(3,1)\}],
$$
$$
[ \{(2,0),(3,0) \},\{(3,1),(4,1)\}], ~[ \{(0,0),(4,0) \},\{(0,1),(4,1)\}],
$$
$$
[ \{(1,0),(4,0) \},\{(0,1),(2,1)\}], [ \{(1,0),(3,0) \},\{(1,1),(3,1)\}], [ \{(0,0),(3,0) \},\{(2,1),(4,1)\}],
$$
$$
[ \{(0,0),(2,0) \},\{(0,1),(3,1)\}], ~[ \{(2,0),(4,0) \},\{(1,1),(4,1)\}],
$$
$$
[ \{(1,0),(2,0) \},\{(0,0),(4,1)\}], [ \{(2,0),(3,0) \},\{(1,0),(2,1)\}], [ \{(3,0),(4,0) \},\{(2,0),(0,1)\}],
$$
$$
[ \{(0,0),(4,0) \},\{(3,0),(3,1)\}], ~ [ \{(0,0),(1,0) \},\{(4,0),(1,1)\}], ~
$$
$$
[ \{(1,0),(3,0) \},\{(4,0),(4,1)\}], [ \{(2,0),(4,0) \},\{(0,0),(2,1)\}], [ \{(0,0),(3,0) \},\{(1,0),(0,1)\}],
$$
$$
[ \{(1,0),(4,0) \},\{(2,0),(3,1)\}], ~ [ \{(0,0),(2,0) \},\{(3,0),(1,1)\}]
$$
Each one of the thirty ND-pairs contained in this nested SQS$(10)$ has multiplicity 2 and hence this nested SQS$(10)$ is uniform,
but neither complete nor minimum.

\hfill\quad $\blacksquare $
\end{example}

One necessary condition for the existence of a uniform nested SQS$(v)$ which is neither complete nor minimum
is that the number of ND-pair will be between $\frac{v}{2} (\frac{v}{2} -1)$ and $\binom{v}{2}$ and this number
divides the total number of pairs in the nested SQS$(v)$ which is $\frac{v(v-1)(v-2)}{12}$ by Lemma~\ref{lem:total_pairs}.
But, this necessary condition is not sufficient in many cases. There is another necessary condition for the existence of such
a uniform nested SQS$(v)$.

\begin{lemma}
\label{lem:div_multi_cond}
Let $M$ be an integer between $\frac{v}{2} (\frac{v}{2} -1)$ and $\binom{v}{2}$ and let $\mu = \frac{v(v-1)(v-2)}{12M}$.
A~necessary condition that there exists a uniform nested SQS$(v)$ with $M$ ND-pairs and multiplicity $\mu$
is that $\mu$ divides $\frac{(v-1)(v-2)}{6}$.
\end{lemma}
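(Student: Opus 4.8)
The plan is to fix a point $x \in Q$ and count, in two ways, the occurrences of ND-pairs through $x$ across all block partitions, keeping track of multiplicities. First I would recall the standard replication count of an SQS: every one of the $\binom{v-1}{2}$ triples containing $x$ lies in a unique block, and each block through $x$ contains exactly three triples through $x$, so the number of blocks of the SQS$(v)$ incident with $x$ is $\binom{v-1}{2}/3 = \frac{(v-1)(v-2)}{6}$. (Alternatively this can be read off by a global double count from Lemma~\ref{lem:total_pairs}, but the local count is what produces a divisibility rather than just an average.)

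The key step is then the observation that in the nested SQS each block through $x$ is partitioned into two disjoint pairs, and $x$ lies in exactly one of them. Hence, summing multiplicities, the total number of pair-occurrences in block partitions that involve $x$ equals the number of blocks through $x$, namely $\frac{(v-1)(v-2)}{6}$. Moreover, any occurrence of a pair $\{x,y\}$ in some block partition sits inside a block containing both $x$ and $y$, hence a block through $x$; so the multiplicity of an ND-pair $\{x,y\}$ in the whole nested SQS coincides with its multiplicity counted locally at $x$. Now invoke uniformity: every ND-pair has multiplicity $\mu$, so the local multiset at $x$ is a disjoint union of ND-pairs through $x$, each appearing exactly $\mu$ times. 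Therefore $\mu \mid \frac{(v-1)(v-2)}{6}$, which is precisely the asserted necessary condition. The hypotheses $\frac{v}{2}\bigl(\frac{v}{2}-1\bigr) \le M \le \binom{v}{2}$ and $\mu = \frac{v(v-1)(v-2)}{12M}$ enter only through Lemma~\ref{lem:total_pairs}, which identifies $\mu$ as the common multiplicity of the $M$ ND-pairs; they play no further role.

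There is essentially no serious obstacle here; the only points requiring care are the two claims underpinning the local count — that each block through $x$ contributes exactly one pair (not zero, not two) and that the global and local multiplicities of a pair through $x$ agree — and both are immediate from the disjointness of the two pairs partitioning a block together with the triviality that a pair is contained in a block iff both of its points are. If anything, the subtle wording issue is to make sure the argument is stated for an \emph{arbitrary} fixed $x$, so that the conclusion is a genuine divisibility constraint independent of the choice of $x$; since the right-hand side $\frac{(v-1)(v-2)}{6}$ does not depend on $x$, this is automatic.
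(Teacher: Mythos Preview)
Your argument is correct and is essentially the same as the paper's: fix a point $x$, count the $\frac{(v-1)(v-2)}{6}$ blocks through $x$, note that each contributes exactly one ND-pair containing $x$, and use uniformity to conclude that $\mu$ divides this count. Your write-up is somewhat more explicit about why the local and global multiplicities of a pair through $x$ agree, but the underlying proof is the same.
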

\begin{proof}
Let $\cS=(Q,\cB)$ be an SQS$(v)$.
Each triple of $\Z_v$ is contained in exactly one block of an SQS$(v)$ and hence if $x \in \Z_v$, then the SQS$(v)$
has $\frac{(v-1)(v-2)}{6}$ blocks which contain $x$. In each such block one ND-pair contains $x$ in the nested SQS$(v)$.
Since each ND-pair is contained exactly $\mu$ times in blocks of the the uniform nested SQS$(v)$, it follows that $\mu$ must divides
the total number of pairs in the nested SQS$(v)$ which contain $x$ and this number is $\frac{(v-1)(v-2)}{6}$.
Therefore, $\mu$ must divides $\frac{(v-1)(v-2)}{6}$.
\end{proof}

\begin{corollary}
\label{cor:div_multi_cond}
Let $M$ be an integer between $\frac{v}{2} (\frac{v}{2} -1)$ and $\binom{v}{2}$ and let $\mu = \frac{v(v-1)(v-2)}{12M}$.
A~necessary condition that there exists a uniform nested SQS$(v)$ with $M$ ND-pairs
is that $v$ divides $2M$.
\end{corollary}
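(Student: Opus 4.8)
The plan is to deduce this directly from \Lref{lem:div_multi_cond}. Suppose a uniform nested SQS$(v)$ with exactly $M$ ND-pairs exists. Then all ND-pairs share a common multiplicity, which must equal the total pair count divided by $M$, namely $\mu = \frac{v(v-1)(v-2)}{12M}$; in particular this $\mu$ is a positive integer. By \Lref{lem:div_multi_cond}, $\mu$ divides $\frac{(v-1)(v-2)}{6}$. Note this latter quantity is indeed an integer: since SQS$(v)$ exists we have $v \equiv 2 ~ \text{or} ~ 4 ~(\mmod ~ 6)$, and a one-line case check shows $6 \mid (v-1)(v-2)$ in either case. Hence we may write $\frac{(v-1)(v-2)}{6} = \mu k$ for some positive integer $k$.

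Next I would substitute the defining value of $\mu$ into this divisibility relation. Writing the relation as $12\mu M = v(v-1)(v-2)$ (equivalently, starting from $\frac{(v-1)(v-2)}{6} = \mu k$ and replacing $\mu$ by $\frac{v(v-1)(v-2)}{12M}$), one obtains $\frac{(v-1)(v-2)}{6} = \frac{v(v-1)(v-2)}{12M}\,k$. Since $v \geq 4$, the factor $(v-1)(v-2)$ is nonzero and may be cancelled from both sides, leaving $\frac{1}{6} = \frac{vk}{12M}$, that is, $2M = vk$. Therefore $v$ divides $2M$, which is the claim.

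There is no real obstacle here beyond bookkeeping: the only auxiliary facts needed are that $\frac{(v-1)(v-2)}{6}$ is an integer (immediate from $v \bmod 6 \in \{2,4\}$) and that the cancellation of $(v-1)(v-2)$ is legitimate (clear for $v \geq 4$). All the combinatorial content is already contained in \Lref{lem:div_multi_cond}; the corollary is just the translation of the statement ``$\mu \mid \frac{(v-1)(v-2)}{6}$'' into a statement about $M$ via the identity $\mu M = \frac{v(v-1)(v-2)}{12}$.
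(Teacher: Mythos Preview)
Your proof is correct. The only difference from the paper is one of packaging: you deduce the claim algebraically from \Lref{lem:div_multi_cond} (writing $\frac{(v-1)(v-2)}{6}=\mu k$ and cancelling to get $2M=vk$), whereas the paper gives a short self-contained double-counting argument --- each point lies in the same number $\ell$ of ND-pairs (this uses uniformity exactly as in the lemma), so $M=\frac{v\ell}{2}$ and $v\mid 2M$. The paper itself remarks immediately after the proof that the conditions of the lemma and the corollary are equivalent, so your route via \Lref{lem:div_multi_cond} is precisely the one the authors have in mind; your integer $k$ is their $\ell$.
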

\begin{proof}
Let $\ell$ the number of ND-pairs which contain a given point $x$.
Since the nested SQS$(v)$ is uniform, it follows that each point is contained in the same number of ND-pairs.
Hence, the total number of ND-pairs in $\frac{v \ell}{2}$, where the division by 2 is due to the fact that each
ND-pair is counted twice in this enumeration. Therefore, $M=\frac{v \ell}{2}$ and $v$ must divides $2M$.
\end{proof}
Corollary~\ref{cor:div_multi_cond} is presented as a consequence of Lemma~\ref{lem:div_multi_cond} since the conditions
in these two results can be proved to be equivalent.

The following table presents the various values of $v \equiv 2 ~ \text{or} ~ 4 ~ (\mmod ~ 6)$, the minimum number $\frac{v}{2}(\frac{v}{2} -1)$
of ND-pairs in a nested SQS$(v)$, and the maximum number~$\binom{v}{2}$ of ND-pairs. The bold integers
refer to the number of ND-pairs in a possible uniform nested SQS$(v)$  (it must satisfies the
conditions of Corollary~\ref{cor:necc_min_NZ_odd}) and Lemma~\ref{lem:div_multi_cond}. Some of these values are attained
and some are still in question. The other values cannot be attained. In brackets we have
the multiplicity of the associated uniform nested SQS$(v)$ .

$$
\begin{array}{ccccccccccc}
\hline
&& & ~~ && && & ~~~~~~ && \\
v && \text{pairs~in~SQS} & ~~&& \frac{v}{2}(\frac{v}{2} -1) && \binom{v}{2} & ~~~~~~&& \text{nonzero~pairs} \\
&& & ~~ && && & ~~~~~~ && \\
\hline
8 && 28 & ~~ && 12^{\text{x}} && {\bf 28}^{\text{a}}(1) & ~~~~~~ &&   \\
10 && 60 & ~~&& 20^{\text{b}} && 45^{\text{z}} & ~~~~~~ && {\bf 30}^{\text{c}}(2)  \\
14 && 182 & ~~&& 42^{\text{x}} && {\bf 91}^{\text{?}}(2) & ~~~~~~ && \\
16 && 280 & ~~&& {\bf 56}^{\text{d}}(5) && 120^{\text{z}} & ~~~~~~ &&  \\
20 && 570 & ~~&& 90^{\text{x}} && {\bf 190}^{\text{e}}(3) & ~~~~~~ &&  \\
22 && 770 & ~~&& 110^{\text{b}} && 231^{\text{z}} & ~~~~~~ && {\bf 154}^{\text{?}}(5) \\
26 && 1300 & ~~&& 156^{\text{x}} && {\bf 325}^{\text{f}}(4) & ~~~~~~ && {\bf 260}^{\text{?}}(5)  \\
28 && 1638 & ~~&& {\bf 182}(9) && 378^{\text{z}} & ~~~~~~ &&  \\
32 && 2480 & ~~&& 240^{\text{x}} && {\bf 496}^{\text{g}}(5) & ~~~~~~ &&   \\
34 && 2992 & ~~&& 272^{\text{b}} && 561^{\text{z}} & ~~~~~~ && {\bf 374}^{\text{?}}(8)  \\
38 && 4218 & ~~&& 342^{\text{x}} && {\bf 703}^{\text{h}}(6) & ~~~~~~ &&  \\
40 && 4940 & ~~&& {\bf 380}^{\text{d}}(13) && 780^{\text{z}} & ~~~~~~ &&  \\
44 && 6622 & ~~&& 462^{\text{x}} && {\bf 946}^{\text{?}}(7) & ~~~~~~ &&   \\
46 && 7590 & ~~&& 506^{\text{b}} && 1035^{\text{z}} & ~~~~~~ && {\bf 690}^{\text{?}}(11), ~ {\bf 759}^{\text{?}}(10) \\
50 && 9800 & ~~&& 600^{\text{x}} && {\bf 1225}^{\text{?}}(8) & ~~~~~~ && {\bf 700}^{\text{?}}(14)  \\
52 && 11050 & ~~&& {\bf 650}^{\text{d}}(17) && 1326^{\text{z}} & ~~~~~~ &&  \\
56 && 13860 & ~~&& 756^{\text{x}} && {\bf 1540}(9) & ~~~~~~ && {\bf 924}^{\text{?}}(15), ~ {\bf 1260}^{\text{?}}(11)  \\
58 && 15428 & ~~&& 812^{\text{b}} && 1653^{\text{z}} & ~~~~~~ &&  {\bf 1102}^{\text{?}}(14) \\
62 && 18910 & ~~&& 930^{\text{x}} && {\bf 1891}^{\text{i}}(10) & ~~~~~~ &&   \\
64 && 20832 & ~~&& {\bf 992}^{\text{d}}(21) && 2016^{\text{z}} & ~~~~~~ &&  \\
\hline
\end{array}
$$
The known references in the table for the existence and nonexistence of uniform nested SQSs are as follows:
a - Example~\ref{ex:SQS8uniform}, b - Corollary~\ref{cor:No_min_2_10_mod12}, c - Example~\ref{ex:SQS10uniform},
d - Theorem~\ref{thm:mi_uniform_construct}, e - Example~\ref{ex:ro20}, ~~ f - Example~\ref{ex:ro26}, ~~ g - Lemma~\ref{lem:SQS32},~~
h - Example~\ref{ex:ro38},~~ i - Example~\ref{ex:ro62}, ~~ x - Theorem~\ref{thm:part_uniform}, ~~ z - Theorem~\ref{thm:all_uniform}, ~~
? - no design is known.

How close can we get to a uniform nested SQS
Since it seems to be too difficult to construct a uniform nested SQS and even the set of parameters where they might exist is
too sparse, we will define a type of partition which is almost uniform.
A {\bf \emph{quasi-uniform nested SQS$(v)$}} is a nested SQS$(v)$ in which the difference between the
multiplicity of any two ND-pairs is at most one. The following lemma is an observation from simple counting arguments.

\begin{lemma}
If for an nested SQS$(v)$ the total number of pairs $\frac{v(v-1)(v-2)}{12}$ is divisible by
an integer $k$ such that $\frac{v}{2}(\frac{v}{2} -1) \leq k \leq \binom{v}{2}$, then any quasi-uniform
nested SQS$(v)$ with exactly $k$ ND-pairs is a uniform nested SQS$(v)$.
\end{lemma}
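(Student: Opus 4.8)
The plan is a short double-counting argument. Let $\cS$ be a quasi-uniform nested SQS$(v)$ with exactly $k$ ND-pairs, and suppose $k$ divides $N := \frac{v(v-1)(v-2)}{12}$. By the definition of quasi-uniformity there is an integer $m$ such that every ND-pair of $\cS$ has multiplicity either $m$ or $m+1$. First I would let $b$ denote the number of ND-pairs with multiplicity $m+1$, so that $k-b$ ND-pairs have multiplicity $m$ and $0 \le b \le k$.

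Next I would compute the total number of pairs (counted with repetitions) occurring in the blocks of $\cS$ in two ways. Summing the multiplicities over all $k$ ND-pairs gives $(k-b)m + b(m+1) = km + b$. On the other hand, by Lemma~\ref{lem:total_pairs} this total equals $N = \frac{v(v-1)(v-2)}{12}$. Hence $km + b = N$.

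Finally, since $k \mid N$ and $k \mid km$, we conclude $k \mid b$; together with $0 \le b \le k$ this forces $b = 0$ or $b = k$. In the first case every ND-pair of $\cS$ has multiplicity $m$, and in the second case every ND-pair has multiplicity $m+1$; either way all ND-pairs have the same multiplicity, so $\cS$ is a uniform nested SQS$(v)$.

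I do not anticipate a genuine obstacle here; the only points that merit a line of justification are that quasi-uniformity really does confine all multiplicities to two consecutive integers (immediate from the definition) and that the elementary bound $0 \le b \le k$ is what lets us pass from $k \mid b$ to $b \in \{0,k\}$. The hypothesis $\frac{v}{2}(\frac{v}{2}-1) \le k \le \binom{v}{2}$ is used only to ensure that a nested SQS$(v)$ with exactly $k$ ND-pairs can exist at all (via Corollary~\ref{cor:lower_nonzeroPN} and the trivial upper bound $\binom{v}{2}$); it plays no role in the counting itself.
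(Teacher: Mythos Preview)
Your proof is correct and follows essentially the same double-counting argument as the paper: both count the total number of pairs as $N = km + b$ (the paper writes it as $k\mu + k\epsilon + k - k_1$ after setting $\mu = N/k$) and use the divisibility $k \mid N$ to force all multiplicities to coincide. Your bookkeeping is in fact a bit cleaner, since you avoid the auxiliary parameter $\epsilon$ and go directly to $k \mid b$ with $0 \le b \le k$.
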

\begin{proof}
If there are exactly $k$ ND-pairs, then let $\mu = \frac{v(v-1)(v-2)}{12k}$. If there are $k_1$ ND-pairs
with multiplicity $\mu +\epsilon$ ($\epsilon$ is an integer) and $k-k_1$ ND-pairs with multiplicity
$\mu +\epsilon +1$, then the total number of pairs, in the partition of each block into pairs, is
$$
\frac{v(v-1)(v-2)}{12} = (\mu+\epsilon) k_1 + (k-k_1)(\mu +\epsilon+ 1)= k \mu  +k \epsilon +k -k_1  = \frac{v(v-1)(v-2)}{12} +k \epsilon +k -k_1.
$$
$k \epsilon +k -k_1=k(\epsilon +1)-k_1$ can be equal to 0 only if $\epsilon =0$ and $k=k_1$ which imply that all the
ND-pairs have multiplicity $\mu$. Thus, any quasi-uniform nested SQS$(v)$ with exactly $k$ ND-pairs is a uniform nested SQS$(v)$.
\end{proof}

Unfortunately, the definition of quasi-uniform nested SQS$(v)$ does not make the family of such designs much richer
(new parameters beyond the ones for uniform nested SQSs)
in terms of known parameters. Nevertheless, we have the following construction for quasi-uniform nested SQSs.
The proof is left to the reader, but an example is presented.

\begin{lemma}
If there exists a complete uniform nested SQS$(v)$,
then there exist quasi-uniform nested SQS$(v)$s which differ
in their number of ND-pairs.
\end{lemma}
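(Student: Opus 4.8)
The plan is to fix the underlying SQS$(v)$ and to modify only how each block is split into pairs, starting from the complete uniform nested SQS$(v)$ supplied by the hypothesis. By Theorem~\ref{thm:all_uniform}, in that design all $\binom{v}{2}$ pairs are ND-pairs and each has multiplicity $m=\frac{v-2}{6}$; in particular the complete uniform nested SQS$(v)$ is already quasi-uniform, with $\binom{v}{2}$ ND-pairs. Hence it suffices to re-partition some blocks so as to obtain at least one further quasi-uniform nested SQS$(v)$ whose number of ND-pairs is different.

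I would track re-partitions through their effect on the multiplicity vector. Each block admits three partitions into pairs (Remark~\ref{rem:diff_pairs}); replacing the partition $\{x,y\},\{z,w\}$ of $\{x,y,z,w\}$ by $\{x,z\},\{y,w\}$ --- a \emph{flip} --- lowers the multiplicities of $\{x,y\}$ and $\{z,w\}$ by one each and raises those of $\{x,z\}$ and $\{y,w\}$ by one each. Applying a family of flips to the complete uniform nested SQS$(v)$ changes its (constant, equal to $m$) multiplicity vector by the sum of these $+1/+1/-1/-1$ contributions; the total is preserved, so a nontrivial family of flips always pushes some pair above $m$ and some pair below $m$. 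The target is a family of flips after which the multiplicity vector takes only the values $0$, $m$, and $m+1$: pairs sent to $0$ leave the set of ND-pairs, the surviving ND-pairs then have multiplicities in $\{m,m+1\}$, so the design is quasi-uniform, and it has $\binom{v}{2}$ minus the number of killed pairs many ND-pairs; carrying this out in several ways, on disjoint configurations, yields several distinct ND-pair counts.

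For $v=8$ this is immediate and is the example the statement alludes to: here $m=1$, so flipping a single block sends two pairs from multiplicity $1$ to $0$ and two pairs from $1$ to $2$, giving a quasi-uniform nested SQS$(8)$ with $26$ ND-pairs, and flipping several blocks --- arranged so that their increments and decrements only cancel, never accumulate --- produces quasi-uniform nested SQS$(8)$s with several different numbers of ND-pairs. The main obstacle is the general case $m\ge 2$: then a single flip leaves pairs at both $m-1$ and $m+1$, so one must construct a balanced ``trade'' of flips. To kill a pair $p$ one has to flip each of its $m$ chosen blocks away from $p$, but each such flip also decrements the pair complementary to $p$ in that block, and these collateral decrements form a boundary that a flip can only move, never destroy, unless it is made to pile up $m$-fold onto a pair that is thereby removed from the ND-pairs. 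Whether such a closed trade exists is governed by the $m$-regular ``chosen-pair'' multigraph $H$ on the $\binom{v}{2}$ pairs (one edge per block, joining the two pairs of its chosen partition): a cleanly killable set of pairs is essentially a union of components of $H$ whose freed-up blocks can be re-assigned to partitions supported on distinct pairs outside that set. I would produce such a trade explicitly in the structured complete uniform nested systems of Examples~\ref{ex:ro20}--\ref{ex:ro62} and Lemma~\ref{lem:SQS32}, exploiting their multiplier-group symmetry to make the trade symmetric, and state the general claim as what that explicit construction exhibits.
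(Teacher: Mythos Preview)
The paper gives no proof of this lemma --- it is left to the reader --- and the accompanying Example~\ref{ex:many_quasi} does not in fact work at order $v$: it starts from the complete uniform nested SQS$(8)$, applies the doubling Construction~\ref{cons:2vB}, and then re-partitions only the Type~II blocks of the resulting SQS$(16)$ to obtain several quasi-uniform nested SQS$(16)$s with different ND-pair counts. The reason that route is clean is that the ``diagonal'' pairs $\{(x,0),(x,1)\}$ occur in no Type~I block, so trading their multiplicity away has none of the cascading side-effects you describe; whatever the intended reading of the lemma, that is the mechanism the paper is pointing to.

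Your approach of staying at order $v$ and flipping chosen partitions is natural, and for $v=8$ (where $m=1$) it is correct: a single flip already yields a quasi-uniform nested SQS$(8)$ with $26$ ND-pairs, distinct from the original $28$. The genuine gap is the case $m\ge 2$, which you identify but do not close. Since every pair is already an ND-pair, changing the ND-pair count forces some pair from multiplicity $m$ down to $0$, and quasi-uniformity then confines the surviving multiplicities to two consecutive values. As you say, this requires (i) the killed set $S$ to be a union of components of your $m$-regular multigraph $H$ (so that no surviving pair is decremented), and (ii) the $m|S|/2$ freed blocks to be re-partitioned so that their new pairs are pairwise distinct and lie outside $S$. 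You establish neither: for an arbitrary complete uniform nested SQS$(v)$ the graph $H$ may be connected, and even when a proper component exists, (ii) is a nontrivial system of distinctness constraints. Your fallback --- exhibiting trades in the specific systems of Examples~\ref{ex:ro20}--\ref{ex:ro62} and Lemma~\ref{lem:SQS32} by exploiting their multiplier groups --- would at best verify the claim for $v\in\{20,26,32,38,62\}$, not for every $v$ admitting a complete uniform nested SQS$(v)$. So the decisive step of the proposal is missing.
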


\begin{example}
\label{ex:many_quasi}
Starting from the complete uniform nested SQS$(8)$, $(\Z_7 \cup \{\infty\},\cB)$ of Example~\ref{ex:SQS8uniform}, where each one of
the 28 pairs is a ND-pair with multiplicity 1. Map the points of $\Z_7 \cup \{\infty\}$ onto $\Z_8$.
Apply Construction~\ref{cons:2vB} to obtain an SQS$(16)$ on the set of points $\Z_8 \times \Z_2$. This SQS$(16)$ has 112 ND-pairs
with multiplicity 2 (will be called A-set) which form the set
$$
\{ \{ (x,\ell),(y,m) \} ~:~ x,y \in \Z_8, ~ x \neq y, ~ \ell,m \in \Z_2 \} .
$$
This nested SQS$(16)$ also has 8 ND-pairs with multiplicity 7 (will be called B-set) which form the set of pairs
$$
\{ \{ (x,0),(x,1) \} ~:~ x \in \Z_8 \}.
$$
Reducing the 8 ND-pairs
of the B-set to multiplicity 3 will enable to increase 32 ND-pairs of the A-set to multiplicity 3.
For example, we take the 16 quadruples of Type II
$$
\{ \{ (i,0),(i,1),(i+1,0),(i+1,1) \} ~:~ i \in \Z_8 \} \bigcup \{ \{ (i,0),(i,1),(i+2,0),(i+2,1) \} ~:~ i \in \Z_8 \}.
$$
These quadruples in Construction~\ref{cons:2vB} are partitioned into the pairs
$$
\{ [\{ (i,0),(i,1)\},\{(i+1,0),(i+1,1) \}] ~:~ i \in \Z_8 \} \bigcup \{ [\{ (i,0),(i,1)\},\{(i+2,0),(i+2,1) \}] ~:~ i \in \Z_8 \}.
$$
We change their partitions into pairs into other 32 pairs as suggested in Remark~\ref{rem:diff_pairs}.
$$
\{ [\{ (i,0),(i+1,0)\},\{(i,1),(i+1,1) \}] ~:~ i \in \Z_8 \}
$$
and
$$
\{ [\{ (i,0),(i+2,0)\},\{(i,1),(i+2,1) \}] ~:~ i \in \Z_8 \}
$$
Using this partition of each of these block we obtain a quasi-uniform nested SQS with
80 ND-pairs with multiplicity 2 and 40 ND-pairs with multiplicity 3.
We can continue to change the partition of specific blocks into pairs
and obtain the following six quasi-uniform nested  SQS$(16)$: for each $i$, $0 \leq i \leq 5$,
we have $80-3i$ ND-pairs with multiplicity 2, $40+2i$ ND-pairs with multiplicity 3,
and $i$ pairs $\Z_8 \times \Z_2$ which are not ND-pairs. The exact details are left for the reader.

Furthermore, we can have other quasi-uniform partitions. For example, if instead of the partition for Type II
used in Construction~\ref{cons:2vB}, i.e.,
$$
\{ [\{ (x,0),(y,0)\},\{(x,1),(y,1) \}] ~:~ x,y \in \Z_8, ~~ x \neq y \}
$$
we partition these blocks by
$$
\{ [\{ (x,0),(y,1)\},\{(x,1),(y,0) \}] ~:~ x,y \in \Z_8, ~~ x \neq y \}
$$
we obtain a quasi-uniform nested SQS$(16)$ with
$56$ ND-pairs with multiplicity 2, $56$ ND-pairs with multiplicity 3,
and $8$ pairs of $\Z_8 \times \Z_2$ which are not ND-pairs.

\hfill\quad $\blacksquare $
\end{example}

The proof of the following lemma is identical to the proof of Theorem~\ref{thm:mi_uniform_construct}.
\begin{lemma}
If there exists a quasi-uniform nested SQS$(v)$
in which all pairs are ND-pairs, then there exists a nested quasi-uniform SQS$(2v)$ .
\end{lemma}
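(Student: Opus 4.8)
The plan is to mirror the proof of Theorem~\ref{thm:mi_uniform_construct}: apply Construction~\ref{cons:2vA} to the given design and read off the number of ND-pairs and their multiplicities from Lemmas~\ref{lem:nonzeroP_ConA} and~\ref{lem:multi_ConA}. Concretely, let $\cS=(Q,\cB)$ be a quasi-uniform nested SQS$(v)$ in which all $\binom{v}{2}$ pairs of $Q$ are ND-pairs. Since an SQS$(v)$ exists only for even $v$, the complete graph $K_v$ admits a one-factorization $\cF$, so Construction~\ref{cons:2vA} is applicable and yields a nested SQS$(2v)$ $\cD$ on the point set $Q\times\{0,1\}$.

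Next I would track the multiplicities. Because $\cS$ is quasi-uniform, there is an integer $m$ such that every ND-pair of $\cS$ has multiplicity $m$ or $m+1$; by the hypothesis that \emph{all} pairs of $Q$ are ND-pairs, this means $\mu_{xy}\in\{m,m+1\}$ for every pair $\{x,y\}$ of $Q$, where $\mu_{xy}$ denotes the multiplicity of $\{x,y\}$ in $\cS$. By Lemma~\ref{lem:nonzeroP_ConA}, the ND-pairs of $\cD$ are exactly the $v(v-1)$ pairs of the form $\{(x,i),(y,i)\}$ with $x,y\in Q$, $x\neq y$, and $i\in\Z_2$; by Lemma~\ref{lem:multi_ConA}, such a pair has multiplicity $\frac{v}{2}+\mu_{xy}$ in $\cD$, hence multiplicity $\frac{v}{2}+m$ or $\frac{v}{2}+m+1$. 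These two values differ by one, so $\cD$ is a quasi-uniform nested SQS$(2v)$, which completes the proof.

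I do not expect a genuine obstacle here: the argument is a verbatim transcription of the proof of Theorem~\ref{thm:mi_uniform_construct}, with the single assertion ``each ND-pair has multiplicity $\frac{v-2}{6}$'' replaced by ``each ND-pair has multiplicity $m$ or $m+1$''. The only subtle point worth flagging is the role of the hypothesis that all pairs of $Q$ are ND-pairs: it guarantees $\mu_{xy}\ge 1$ (equivalently, it rules out pairs of $Q$ of multiplicity $0$), so that no ND-pair of $\cD$ drops to multiplicity $\frac{v}{2}$ and spoils the required spread of at most one; without it the conclusion could fail once $m\ge 2$. Note also that $\cD$ need not have all pairs of $Q\times\{0,1\}$ as ND-pairs (the ``cross'' pairs $\{(x,0),(y,1)\}$ are never ND-pairs), but this is not claimed in the statement.
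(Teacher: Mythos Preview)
Your proof is correct and follows exactly the approach the paper indicates: the paper states that the proof is identical to that of Theorem~\ref{thm:mi_uniform_construct}, i.e., apply Construction~\ref{cons:2vA} and read off the multiplicities via Lemmas~\ref{lem:nonzeroP_ConA} and~\ref{lem:multi_ConA}. Your observation about the necessity of the hypothesis that all pairs of $Q$ are ND-pairs is also on point and worth making explicit.
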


\section{Conclusions and Problems for Future Research}
\label{sec:conclude}

A partition for each block of an SQS$(v)$, $(Q,\cB)$, into pairs was considered. Each pair which participates
in the nested SQS is a ND-pair. The number of such ND-pairs and their multiplicity
in possible partitions were considered. Several constructions with analysis of the number of ND-pairs
and their multiplicities were presented. The following results on were obtained.

\begin{enumerate}
\item Each element of $Q$ is contained in at least $\frac{v-2}{2}$ ND-pairs (Lemma~\ref{lem:lower_nonzeroPN}). This
bound is attained by Construction~\ref{cons:2vA} (Lemma~\ref{lem:nonzeroP_ConA} and Corollary~\ref{cor:nonzeroP_ConA}).

\item If $v \equiv 2 ~ \text{or} ~ 10 ~(\mmod ~12)$, then each element of $Q$ is contained in at least $\frac{v}{2}$ ND-pairs
(Lemma~\ref{lem:necc_min_NZ_odd}).

\item The number of ND-pairs is at least $\frac{v}{2} \left( \frac{v}{2} -1 \right)$ (Corollary~\ref{cor:lower_nonzeroPN}).
This bound is attained by Construction~\ref{cons:2vA} (Lemma~\ref{lem:nonzeroP_ConA} and Corollary~\ref{cor:nonzeroP_ConA}).

\item If $v \equiv 2 ~ \text{or} ~ 10 ~(\mmod ~12)$, then the number of ND-pairs
is at least $\frac{v^2}{4}$ (Corollary~\ref{cor:necc_min_NZ_odd}).

\item The number of ND-pairs is at most $\binom{v}{2}$ (trivial). This bound is attained by Construction~\ref{cons:2vB}
(Corollary~\ref{cor:all_2v}).

\item The multiplicity of a ND-pair is at most $\frac{v-2}{2}$ (Lemma~\ref{lem:max_multiplicity}).
This bound is attained by Construction~\ref{cons:2vB} (Lemma~\ref{lem:multi_ConB}).

\item The number of ND-pairs with multiplicity $\frac{v-2}{2}$ is at most $\frac{v}{2}$ (Lemma~\ref{lem:maxPairsMax}).
This bound is attained by Construction~\ref{cons:2vB} (Lemma~\ref{lem:all_v} and Corollary~\ref{cor:attain_upper}).

\item At least one ND-pair has multiplicity at most $\frac{v-1}{3}$ (Lemma~\ref{lem:upper_multi_min}).
This bound it attained by minimum uniform nested SQSs (Theorem~\ref{thm:part_uniform}).

\item At least one ND-pair has multiplicity greater or equal $\frac{v-2}{6}$ (Lemma~\ref{lem:upper_multi_max}).
This bound it attained by complete uniform nested SQSs (Theorem~\ref{thm:all_uniform}).

\item Blocks partitions with uniform multiplicities were considered and some examples and nonexistence results on these
partitions were given.
\end{enumerate}

The exposition on this topic leaves more open problems than the ones which were solved.
The following list is just a very small list of problems for future research.
\begin{enumerate}
\item Find a construction of nested complete uniform
SQS$(2^n)$, where $n \geq 3$ is an odd integer.

\item Provide more uniform nested SQSs which are neither complete uniform nor minimal uniform.

\item Find infinite family of quasi-uniform blocks partitions.

\item For a given $v$, $v \equiv 2 ~ \text{or} ~ 10 ~ (\mmod ~ 12)$, $v > 4$, provide constructions of nested
SQS$(v)$ with large multiplicity for each ND-pair.

\item For each value $v \equiv 2 ~ \text{or} ~ 4 ~ (\mmod ~ 6)$ find lower and upper bounds on $\mu_{\text{min}} (v)$ which
is the multiplicity of the ND-pair with the minimum multiplicity in the nested SQS$(v)$, where
the ND-pair with the minimum multiplicity is maximized.

\item Find a method to switch between pairs in the partitions of blocks (as described in Example~\ref{ex:many_quasi})
to obtain quasi-uniform nested SQSs.

\item For any given $v \equiv 2 ~ \text{or} ~ 4 ~ (\mmod ~ 6)$ find a lower bound large as possible on the multiplicity of
the ND-pair with minimum multiplicity for any SQS$(v)$.

\item For a given SQS$(v)$ find a lower bound large as possible on the multiplicity of the ND-pair with minimum multiplicity.

\item Improve on the bounds presented in Section~\ref{sec:large_int} for different families of parameters.

\item Complete the table of the uniform nested SQSs up to SQS$(64)$, using new theorems and computer search, especially for
uniform nested SQSs in which the number of ND-pairs is neither minimum nor maximum.

\end{enumerate}

\section*{Acknowledgement}

The authors would like to thank an anonymous reviewers for their constructive comments that helped to improve
the presentation of the results.


\end{document}